\documentclass[a4paper,10pt]{amsart}
\usepackage{amsmath, amssymb, amsthm}
\usepackage{verbatim}
\usepackage{hyperref}
\usepackage[all]{xy}

\newcounter{theorem}
\newtheorem{thm}[theorem]{\sc Theorem}
\newtheorem{lemma}[theorem]{\sc Lemma}
\newtheorem{prop}[theorem]{\sc Proposition}

\newtheorem{defn}[theorem]{\sc Definition}

\newtheorem{conjecture}[theorem]{\sc Conjecture}

\theoremstyle{remark}
\newtheorem*{remark*}{\sc Remark}
\newtheorem{remark}[theorem]{\sc Remark}

\numberwithin{equation}{section}
\numberwithin{theorem}{section}

\newcommand{\e}{\epsilon}
\newcommand{\dl}{\delta}

\newcommand{\C}{\mathbb{C}}

\renewcommand{\setminus}{\backslash}
\newcommand{\K}{\mathcal{K}}
\newcommand{\tens}{\otimes}
\newcommand{\dsum}{\oplus}
\newcommand{\bigdsum}{\bigoplus}
\newcommand{\dunion}{\amalg}

\renewcommand{\emptyset}{\varnothing}
\newcommand{\iso}{\cong}
\newcommand{\F}{\mathcal{F}}

\newcommand{\id}{\mathrm{id}}

\newcommand{\jsZ}{\mathcal{Z}}
\newcommand{\cO}{\mathcal{O}}
\newcommand{\dn}{\mathrm{dim}_{\mathrm{nuc}}}
\newcommand{\dr}{\mathrm{dr}\,}
\newcommand{\ev}{\mathrm{ev}}

\newcommand{\labelledthing}[2]{\hspace{4pt}\buildrel {#2} \over #1 \hspace{3pt}} 

\newcommand{\labelledrightarrow}{\labelledthing{\longrightarrow}}

\newcommand{\ccite}[2]{\cite[#1]{#2}}

\begin{document}

\title{\sc Decomposition rank of $\jsZ$-stable $\mathrm{C}^*$-algebras}
\author{Aaron Tikuisis}
\address{\hskip-\parindent
	Mathematisches Institut der WWU M\"unster, Einsteinstra{\ss}e 62, 48149 M\"unster, Germany.}
	\email{a.tikuisis@uni-muenster.de}
	\author {Wilhelm Winter}
	\email{wwinter@uni-muenster.de}

	\date{\today}

	\thanks{Both authors were supported by DFG (SFB 878).  W.W.\ was also supported by EPSRC (grants No.\ EP/G014019/1 and No.\ EP/I019227/1).}

\keywords{Nuclear $\mathrm{C}^*$-algebras; decomposition rank; nuclear dimension; Jiang-Su algebra; classification; $C(X)$-algebras}
\subjclass[2010]{46L35,46L85}

	\begin{abstract}
	We show that $\mathrm{C}^{*}$-algebras of the form $C(X) \otimes \mathcal{Z}$, where $X$ is compact and Hausdorff and $\mathcal{Z}$ denotes the Jiang--Su algebra, have decomposition rank at most $2$. 
	This amounts to a dimension reduction result for $\mathrm{C^{*}}$-bundles with sufficiently regular fibres. It establishes an important case of  a conjecture on the fine structure of nuclear $\mathrm{C}^{*}$-algebras of Toms and the second named author, even in a nonsimple setting, and gives evidence that the topological dimension of noncommutative spaces is governed by fibres rather than base spaces. 
	\end{abstract}

	\maketitle

	\section{Introduction}

	\noindent
	The structure and classification theory of nuclear $\mathrm{C}^{*}$-algebras has seen rapid pro\-gress in recent years, largely spurred by the subtle interplay between certain topological and algebraic regularity properties, such as finite topological dimension, tensorial absorption of suitable strongly self-absorbing $\mathrm{C}^{*}$-algebras and order completeness of homological invariants, see \cite{ElliottToms} for an overview. In the simple and unital case, these relations were formalized by A.~Toms and the second named author as follows: 

	\begin{conjecture}
	\label{CTW}
	For a separable, simple, unital, nonelementary, stably finite and nuclear $\mathrm{C}^{*}$-algebra $A$, the following are equivalent:

	\begin{enumerate}
	\item $A$ has finite decomposition rank, $\mathrm{dr}\, A < \infty$,
	\item $A$ is $\mathcal{Z}$-stable, $A \cong A \otimes \mathcal{Z}$,
	\item $A$ has strict comparison of positive elements.
	\end{enumerate}
	\end{conjecture}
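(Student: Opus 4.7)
The plan is to prove the equivalence as a cycle, tackling the easier directions first. The implication $(ii) \Rightarrow (iii)$ should follow from R{\o}rdam's observation that $\jsZ$-stability forces almost unperforation of the Cuntz semigroup, which in the stably finite case is equivalent to strict comparison of positive elements by lower semicontinuous traces. For $(i) \Rightarrow (ii)$, I would invoke Winter's characterisation of $\jsZ$-stability in terms of almost central, approximately order-zero completely positive maps from matrix algebras into $A$: finite $\dr A$ produces an abundance of such maps with the correct orthogonality pattern, and in the simple, unital, stably finite, non-elementary setting one can upgrade these to the approximate central embedding of a prime dimension drop algebra that characterises $A \iso A \tens \jsZ$.

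The implication $(iii) \Rightarrow (ii)$ is more subtle and I would run it via the Matui--Sato machinery: strict comparison plus a covering-number estimate on the extreme boundary of the tracial state space yields property (SI) in the central sequence algebra $A_\omega \cap A'$, which combined with excision and a uniform McDuff-type argument on the tracial von Neumann algebra closure produces the unital embedding of $\jsZ$ one needs. The principal obstacle in this direction is handling algebras whose tracial simplex has infinite-dimensional or pathological boundary.

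The genuinely hard direction is $(ii) \Rightarrow (i)$, and this is where the dimension-reduction result announced in the abstract would feed in. My strategy would be to approximate a given simple, separable, unital, nuclear, $\jsZ$-stable $A$ by locally (sub)homogeneous building blocks $B$ whose spectra are finite-dimensional compacta, then tensor each $B$ with $\jsZ$ using $A \iso A \tens \jsZ$ to replace each piece by $B \tens \jsZ$. Passing to fibres and invoking $\dr(C(X) \tens \jsZ) \le 2$, one bounds the decomposition rank of each $B \tens \jsZ$, and a two-colour gluing argument between successive approximations together with the upper semicontinuity of $\dr$ under inductive limits then yields a uniform finite bound on $\dr A$.

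The binding constraint I expect to face is the production of the locally subhomogeneous approximation in the first place. Abstract $\jsZ$-stability alone does not supply it; what is needed is an internal tracial approximation by homogeneous blocks with controlled dimension, and in practice this reduces to quasidiagonality of all traces on $A$ together with a classification-flavoured input along the lines of Lin's tracial approximation by interval algebras. Quasidiagonality of traces on simple nuclear $\jsZ$-stable $\mathrm{C}^{*}$-algebras is itself a deep open problem outside the UHF-absorbing setting, and any complete proof of the conjecture would either have to establish it or assume it. Accordingly, the realistic scope of the strategy above is to prove $(ii) \Rightarrow (i)$ under an additional hypothesis such as $\mathrm{UHF}$-absorption or tracial rank zero, leaving the full conjecture contingent on progress on quasidiagonality.
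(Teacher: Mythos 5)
The first thing to say is that the statement you are proving is stated in the paper as a \emph{conjecture}, and the paper neither proves it nor claims to: it explicitly records that (i) $\Rightarrow$ (ii) $\Rightarrow$ (iii) are known in full generality, that (iii) $\Rightarrow$ (ii) has been established only under additional structural hypotheses (the Matui--Sato and Winter results, which require control of the tracial simplex), and that (ii) $\Rightarrow$ (i) is the least understood and remains open. The paper's actual theorem is a special case of (ii) $\Rightarrow$ (i) for the (generally nonsimple) algebras $C(X) \otimes \mathcal{Z}$, proved by a mechanism quite different from the one you sketch: rather than producing an internal tracially subhomogeneous approximation of an abstract $A$, it works directly with the first factor embedding $C(X) \to C(X) \otimes \mathcal{Z}$, carves out a tracially small hereditary subalgebra hosting a copy of $C_0(Z) \otimes \mathcal{O}_2$ via quasidiagonality of the cone over $\mathcal{O}_2$, and imports the Kirchberg--R{\o}rdam one-dimensional approximation of $\mathcal{O}_2$-bundles to obtain a $3$-colourable approximate partition of unity, hence $\mathrm{dr} \leq 2$.

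Your proposal is therefore not a proof but an accurate survey of the state of the art, and you effectively say so yourself: the two binding gaps you name --- the trace-simplex restriction in (iii) $\Rightarrow$ (ii), and quasidiagonality of traces together with a classification-flavoured tracial approximation in (ii) $\Rightarrow$ (i) --- are exactly the points at which the conjecture was open when this paper was written. Your closing concession that the full conjecture is ``contingent on progress on quasidiagonality'' is an admission that the argument does not close, and a conditional strategy cannot be accepted as a proof of the statement. What can be credited is that your identification of which implications are known, which are partial, and where the genuine difficulty of (ii) $\Rightarrow$ (i) lies agrees with the paper's own assessment in its introduction; but the paper offers no proof of the conjecture against which a complete comparison could be made, and neither do you.
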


	Here, decomposition rank is a notion of noncommutative topological dimension introduced in \cite{KirchbergWinter:CovDim}, $\mathcal{Z}$ denotes the Jiang--Su algebra introduced in \cite{JiangSu} and strict comparison essentially means that positive elements may be compared  in terms of tracial values of their support projections, cf.\ \cite{Rordam:ICM}. If one drops the finiteness assumption on $A$, one should replace (i) by 
	\begin{enumerate}
	\item[(i')] $A$ has finite nuclear dimension, $\dim_{\mathrm{nuc}}A <\infty$,
	\end{enumerate}
	where nuclear dimension \cite{WinterZacharias:NucDim} is a variation of the decomposition rank which can have finite values also for infinite $\mathrm{C}^{*}$-algebras. 

	The conjecture still makes sense in the nonsimple situation, provided one asks $A$ to have no elementary subquotients (this is a minimal requirement for $\mathcal{Z}$-stability); one also has to be slightly more careful about the definition of comparison in this case. 

	Nuclearity in this context manifests itself most prominently via approximation properties with particularly nice completely positive maps \cite{CSSWW:perturbations,HKW}.

	Conjecture~\ref{CTW} has a number of important consequences for the structure of nuclear $\mathrm{C}^{*}$-algebras and it has turned out to be pivotal for many recent classification results, especially in view of the examples given in \cite{Ror:simple, Toms:annals, Vil:sr}. Moreover, it highlights the striking analogy between the classification program for nuclear $\mathrm{C}^{*}$-algebras, cf.\ \cite{Ell:classprob}, and Connes' celebrated classification of injective $\mathrm{II}_{1}$ factors  \cite{Con:class}.

	Implications (i), (i') $\Longrightarrow$ (ii) $\Longrightarrow$ (iii) of Conjecture \ref{CTW} are by now known to hold in full generality \cite{Rordam:Z,Winter:drZstable,Winter:pure}; (iii) $\Longrightarrow$ (ii) has been established under certain additional structural hypotheses \cite{MatuiSato:Comp,Winter:pure}, all of which in particular guarantee sufficient divisibility properties.

	Arguably, it is (ii) $\Longrightarrow$ (i) which remains the least well understood of these implications. While there are promising partial results available \cite{Lin:LocalAH, Winter:pure, WinterZacharias:NucDim}, all of these factorize through classification theorems of some sort. This in turn makes it hard to explicitly identify the origin of finite dimensionality. 

	In the simple purely infinite (hence $\mathcal{O}_{\infty}$-stable, hence $\mathcal{Z}$-stable \cite{Kir:CentralSequences,KirRor:pi2}) case, one has to use Kirchberg--Phillips classification \cite{Kir:ICM, KirchbergPhillips:ExactEmbedding}  as well as a range result providing models to exhaust the invariant \cite{Ror:encyc} and then again Kirchberg--Phillips classification to show that these models have finite nuclear dimension \cite{WinterZacharias:NucDim}. 

	In the simple stably finite case, at this point only approximately homogeneous (AH) algebras or approximately subhomogeneous (ASH) algebras for which projections separate traces are covered \cite{Lin:AsympClassification, LinNiu:KKlifting, Winter:drSH, Winter:localizing}. (This approach also includes crossed products associated to uniquely ergodic minimal dynamical systems \cite{TomsWinter:rigidity, TomsWinter:MinClassification}.) While both of these classes after stabilizing with $\mathcal{Z}$ can by now be shown directly to consist of TAI and TAF algebras \cite{Lin:LocalAH}, again finite topological dimension will only follow from classification results \cite{ElliottGongLi:AHclassification, Lin:AsympClassification, Toms:rigidity, Winter:drZstable} and after comparing to models which exhaust the invariant \cite{Elliott:ashrange,Villadsen:AHrange}; see also \cite{Ror:encyc} for an overview.  

	Once again, the classification procedure does not make it entirely transparent where the finite topological dimension comes from, but at least Elliott--Gong--Li classification of simple AH algebras (of very slow dimension growth -- later shown to be equivalent to slow dimension growth and to $\mathcal{Z}$-stability \cite{Winter:pure}) heavily relies on Gong's deep dimension-reduction theorem  \cite{Gong:SimpleReduction}. Gong gives an essentially explicit way of replacing a given AH limit decomposition with one of low topological dimension. However, this method is technically very involved and requires both simplicity and the given inductive limit decomposition. It does not fully explain to what extent the two are necessary; in particular, it is in principle conceivable that a decomposition similar to that of Gong exists for algebras of the form $C(X) \otimes \mathcal{Q}$ (with $\mathcal{Q}$ being the universal UHF algebra). 

	In this article we show how finite topological dimension indeed arises for algebras of this type; in fact, we are able to cover algebras of the form $C(X) \otimes \mathcal{Z}$, and hence also locally homogeneous $\mathcal{Z}$-stable $\mathrm{C}^{*}$-algebras (not necessarily simple, or with a prescribed inductive limit structure). We hope our argument will shed new light on the conceptual reasons why finite topological dimension should arise in the presence of sufficient $\mathrm{C}^{*}$-algebraic regularity. Our method is based on approximately embedding the cone over the Cuntz algebra $\mathcal{O}_{2}$ into tracially small subalgebras of the algebra in question; these play a similar role as the small corners used in the definition of TAF algebras \cite{Lin:TAFduke} or the small hereditary subalgebras in property SI  \cite{MatuiSato:Comp}. We mention that we only obtain (a strong version of) finite decomposition rank, whereas Gong's reduction theorem yields an inductive limit decomposition; however, for many purposes finite decomposition rank is sufficient, cf.\ \cite{TomsWinter:rigidity, Winter:drZstable}.

	In \cite{KirchbergRordam:pi3}, algebras of the form $C(X) \otimes \mathcal{O}_{2}$ were shown to be approximated by algebras of the form $C(\Gamma) \otimes \mathcal{O}_{2}$ with $\Gamma$ one-dimensional. Since $\mathcal{O}_{2}$ is by now known to have finite nuclear dimension \cite{WinterZacharias:NucDim}, this may be regarded as strong evidence that the topological dimension of a $\mathrm{C}^{*}$-bundle depends on the noncommutative size of the fibres more than the size of the base space. (A somewhat similar phenomenon was already observed for stable rank by Rieffel \cite{Rfl:sr}.)

It is remarkable that \cite{KirchbergRordam:pi3} does not rely on a classification result in any way. It does, however, mix commutativity (of the structure algebra) and pure infiniteness (of the fibres). 

It is not clear from \cite{KirchbergRordam:pi3} whether such a dimension type reduction also occurs in the setting of stably finite fibres. In the present article we show that it does, by developing a method to transport \cite{KirchbergRordam:pi3} to the situation where the fibres are UHF algebras (to pass to the case where each fibre is $\mathcal{Z}$ then requires a certain amount of additional machinery -- at least if one wants to increase the dimension by no more than one). The crucial concept to link purely infinite and stably finite fibres is quasidiagonality of the cone over $\mathcal{O}_{2}$, discovered by Voiculescu and by Kirchberg \cite{Kirchberg:nonsemisplit,Voiculescu:Quasidiagonal}.  

One should mention that the fact that the fibres are specific strongly self-absorbing algebras in both \cite{KirchbergRordam:pi3} and in our result plays an important, but in some sense secondary role: In \cite{KirchbergRordam:pi3} (combined with \cite{WinterZacharias:NucDim}) one can replace $\mathcal{O}_{2}$ with $\mathcal{O}_{\infty}$, or in fact with any UCT Kirchberg algebra, and still arrive at finite nuclear dimension. More generally, our result yields the respective statement if the fibres have finite nuclear dimension and are $\mathcal{Z}$-stable, e.g.\ in the simple, nuclear, classifiable case.

While at the current stage we only cover the case of highly homogeneous bundles (in fact, $C(X) \otimes \mathcal{Z}$ -- but it is an easy exercise to pass to more general bundles with Hausdorff spectrum from here), it will be an important task to handle bundles with non-Hausdorff spectrum, e.g.\ $B \otimes \mathcal{Z}$ with $B$ subhomogeneous, in order to also cover transformation group $\mathrm{C}^{*}$-algebras. This will be pursued in subsequent work by combining our technical Lemma \ref{DimDropUnit} with the methods of \cite{Winter:drSH}; in preparation, we have stated \ref{DimDropUnit} in a form slightly more general than necessary for the current main result, Theorem~\ref{AHZdr}.

\section{Decomposition rank of homomorphisms}
\label{DRSect}

\noindent
In this section, we introduce the notions of decomposition rank and nuclear dimension of $^*$-homomorphisms, building naturally on the respective notions for $\mathrm{C}^*$-algebras, just as nuclearity for $^*$-homomorphisms arises from the completely positive approximation property for $\mathrm{C}^*$-algebras. We first recall the notion of completely positive contractive (c.p.c.) order zero maps, cf.\ \cite{Winter:CovDim1}.

\begin{defn}
Let $A,B$ be $\mathrm{C}^*$-algebras and let $\phi:A \to B$ be a c.p.c.\ map.
We say that $\phi$ has order zero if it preserves orthogonality in the sense that if $a,b \in A_+$ satisfy $ab=0$ then $\phi(a)\phi(b)=0$.
\end{defn}

\begin{defn}\label{DimNucForHomos}
Let $\alpha:A \to B$ be a $^*$-homomorphism of $\mathrm{C}^*$-algebras.
We say that $\alpha$ has decomposition rank at most $n$, and denote $\dr(\alpha) \leq n$, if for any finite subset $\F \subset A$ and any $\e > 0$, there exists a finite dimensional $\mathrm{C}^*$-algebra $F$ and c.p.c.\ maps
\[ \psi: A \to F \text { and } \phi: F \to B \]
such that $\phi$ is $(n+1)$-colourable, in the sense that we can write
\[ F=F^{(0)} \dsum \ldots \dsum F^{(n)} \]
and $\phi|_{F^{(i)}}$ has order zero for all $i$, and such that $\phi\psi$ is point-norm close to $\alpha$, in the sense that for $a \in \F$,
\[ \|\alpha(a) - \phi\psi(a)\| < \e. \]

We may define nuclear dimension of $\alpha$ similarly (and denote $\dn(\alpha) \leq n$), where instead of requiring that $\phi$ is contractive, we only ask that $\phi|_{F^{(i)}}$ is contractive for each $i$.
\end{defn}

\begin{remark}
Notice that the decomposition rank (respectively nuclear dimension) of a $\mathrm{C}^*$-algebra,  as defined in \ccite{Definition 3.1}{KirchbergWinter:CovDim} (respectively \ccite{Definition 2.1}{WinterZacharias:NucDim}) is just the decomposition rank (respectively nuclear dimension) of the identity map.
\end{remark}

The following generalizes some permanence properties for decomposition rank and nuclear dimension of $\mathrm{C}^*$-algebras.
Proofs are omitted, as they are essentially the same as those found in \cite{KirchbergWinter:CovDim, Winter:CovDim1, WinterZacharias:NucDim}.

\begin{prop}
\label{drPermanence}
Let $A,B$ be $\mathrm{C}^*$-algebras and let $\alpha:A \to B$ be a $^*$-homomorphism.
\begin{enumerate}
\item
Suppose that $A$ is locally approximated by a family of $\mathrm{C}^{*}$-subalgebras $(A_\lambda)_{\Lambda}$, in the sense that for every finite subset $\F \subset A$ and every tolerance $\e > 0$, there exists $\lambda$ such that $\F \subset_\e A_\lambda$.
Then
\[ \dr(\alpha) \leq \sup_{\Lambda} \dr(\alpha|_{A_\lambda}) \]
and
\[ \dn(\alpha) \leq \sup_{\Lambda} \dn(\alpha|_{A_\lambda}). \]
\item
If $C \subset A$ is a hereditary $\mathrm{C}^{*}$-subalgebra, then  
\[ \dr(\alpha_{C}) \leq \dr(\alpha)\]
and
\[ \dn(\alpha_{C}) \leq \dn(\alpha), \]
where $\alpha_{C}:= \alpha|_{C}: C \to \mathrm{her}(\alpha({C}))$.
\end{enumerate}
\end{prop}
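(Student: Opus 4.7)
For part (i), my plan is a standard three-$\e$ reduction. Fix $\F \subset A$ and $\e > 0$, and write $n := \sup_{\Lambda} \dr(\alpha|_{A_\lambda})$ (finite, else there is nothing to prove). First choose an index $\lambda$ and a finite set $\F' \subset A_\lambda$ approximating $\F$ within $\e/3$. Apply $\dr(\alpha|_{A_\lambda}) \leq n$ to $\F'$ with tolerance $\e/3$, producing c.p.c.\ maps $\psi' \colon A_\lambda \to F$ and an $(n+1)$-colourable $\phi \colon F \to B$ with $\|\phi\psi'(a') - \alpha(a')\| < \e/3$. Since $F$ is finite dimensional and therefore injective as an operator system, Arveson's extension theorem furnishes a c.p.c.\ extension $\psi \colon A \to F$ of $\psi'$. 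The colouring decomposition of $\phi$ is untouched, so contractivity of $\psi$, $\phi$, and the $^*$-homomorphism $\alpha$ together with the triangle inequality delivers $\|\phi\psi(a) - \alpha(a)\| < \e$ for $a \in \F$. The same outline works verbatim for $\dn(\alpha)$, with only the colouring condition on $\phi$ adjusted.

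For part (ii), fix $\F \subset C$ finite and $\e > 0$. The plan is to transport an approximation for $\alpha$ into an approximation for $\alpha_C$ by cutting down with a positive contraction $h \in C_+$ chosen to act as an approximate unit for $\F$, namely $\|h^{1/2} a h^{1/2} - a\| < \e/4$ for $a \in \F$; such an $h$ exists by standard approximate-unit techniques inside $C$. Apply $\dr(\alpha) \leq n$ to a finite set containing $\F \cup \{h\}$ with a sufficiently small tolerance $\delta$, producing c.p.c.\ maps $\psi \colon A \to F$ and a $(n+1)$-colourable $\phi = \phi^{(0)} \dsum \cdots \dsum \phi^{(n)} \colon F \to B$ with $\phi\psi$ approximating $\alpha$ on the chosen set. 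Take $\tilde\psi := \psi|_C$ (still c.p.c.), and build $\tilde\phi \colon F \to \hered{\alpha(C)}$ by compressing $\phi$ by $\alpha(h^{1/2})$ on both sides. This places the image inside $\hered{\alpha(C)}$, and the approximation $\tilde\phi\tilde\psi \approx \alpha_C$ on $\F$ follows by the approximate-unit property of $h$ together with $\phi\psi \approx \alpha$.

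The main technical obstacle is that naive two-sided compression $x \mapsto \alpha(h^{1/2}) \phi^{(i)}(x) \alpha(h^{1/2})$ generally destroys the order zero property of $\phi^{(i)}|_{F^{(i)}}$, which is required for the $(n+1)$-colourable decomposition of $\tilde\phi$. The fix, as in the permanence arguments of \cite{KirchbergWinter:CovDim, Winter:CovDim1, WinterZacharias:NucDim}, is to combine Winter's structure theorem for c.p.c.\ order zero maps -- which writes each $\phi^{(i)}$ as $\phi^{(i)}(x) = \pi_i(x) k_i$ for $k_i := \phi^{(i)}(1_{F^{(i)}})$ and a $^*$-homomorphism $\pi_i \colon F^{(i)} \to M(C^*(k_i))$ -- with the associated order zero functional calculus, under which $\pi_i(\cdot) g(k_i)$ remains c.p.c.\ order zero for any $g \in C_0((0,1])$. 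Choosing $g$ to cut the spectrum of $k_i$ away from zero, and exploiting that $\phi\psi(h) \approx \alpha(h) \in \hered{\alpha(C)}$ forces the relevant part of $k_i$ into $\hered{\alpha(C)}$, one constructs an honest order zero $\tilde\phi^{(i)} \colon F^{(i)} \to \hered{\alpha(C)}$ agreeing with $\phi^{(i)}$ within the required tolerance on the image of $\tilde\psi$. This is the technical heart of the proof, and the nuclear dimension version is identical with the contractivity demand placed individually on each summand.
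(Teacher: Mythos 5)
The paper gives no proof of this proposition (it defers to the permanence arguments of \cite{KirchbergWinter:CovDim, Winter:CovDim1, WinterZacharias:NucDim}), so I am judging your outline against those standard arguments. Part (i) is correct and is exactly the standard route: injectivity of the finite-dimensional $F$ (Arveson) to extend $\psi'$ from $A_\lambda$ to $A$, then a three-$\e$ estimate. One cosmetic point: in the nuclear dimension case $\phi$ is only $(n+1)$-bounded rather than contractive, so the tolerance must be split as $\e/(n+3)$ or similar rather than $\e/3$; this affects constants only.

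Part (ii) has a genuine gap at the step you yourself flag as the technical heart. What the hypothesis actually controls is $\phi^{(i)}(\psi(h)1_{F^{(i)}})$: these are positive summands adding up to $\phi\psi(h)\approx_\delta \alpha(h)$, so each is dominated by $\alpha(h)+\delta$ and a small cut-down of each lies in $\hered{\alpha(C)}$. Your functional calculus, however, is applied to $k_i=\phi^{(i)}(1_{F^{(i)}})$, and nothing forces any $g(k_i)$ with $g\in C_0((0,1])$ into $\hered{\alpha(C)}$: the unit $1_{F^{(i)}}$ bears no relation to $h$, and $\psi(h)1_{F^{(i)}}$ may be small (even zero) on parts of $F^{(i)}$ where $\phi^{(i)}$ is large and supported far from $\alpha(C)$ --- already visible for $A=C([0,1])$, $C=C_0((0,\tfrac12))$ with a summand evaluated at a point outside $(0,\tfrac12)$. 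So the assertion that $\phi\psi(h)\approx\alpha(h)$ ``forces the relevant part of $k_i$ into $\hered{\alpha(C)}$'' is precisely the unproved claim, and the two candidate maps you propose --- the two-sided compression by $\alpha(h^{1/2})$ and the order zero map $\pi_i(\cdot)g(k_i)$ --- are not mutually close, and neither alone has both the order zero property and range in $\hered{\alpha(C)}$.

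The missing ingredient is a cut-down \emph{inside the finite-dimensional algebra} before invoking the functional calculus: let $q_i\in F^{(i)}$ be the spectral projection of $\psi(h)1_{F^{(i)}}$ for $[\eta^{1/2},1]$ and restrict to the corner $q_iF^{(i)}q_i$ (this trivially preserves order zero). Then $\phi^{(i)}(q_i)\leq \eta^{-1/2}\phi^{(i)}(\psi(h)1_{F^{(i)}})\leq \eta^{-1/2}(\alpha(h)+\delta)$, so applying $(\,\cdot\,-\delta')_+$ to $\phi^{(i)}|_{q_iF^{(i)}q_i}$ via the order zero functional calculus now genuinely lands in $\hered{\alpha(h)}\subset\hered{\alpha(C)}$. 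Discarding $1_{F^{(i)}}-q_i$ costs only $O(\eta^{1/4})$ on $\F$, because $a\leq\|a\|h+2\eta$ gives $\psi(a)\leq\|a\|\psi(h)+2\eta$ and one concludes with Kadison--Schwarz. Taking $\tilde\psi(a):=\bigoplus_i q_i\psi(a)q_i$ together with these corrected order zero maps repairs the argument; with this insertion your outline becomes the standard proof.
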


When computing the decomposition rank (or nuclear dimension), it is often convenient to replace the codomain by its sequence algebra, defined to be
\[ \textstyle{A_\infty := \big(\prod_{\mathbb{N}} A\big) / \big(\bigdsum_{\mathbb{N}} A\big).} \]
We shall  denote by 
\[
\textstyle
\pi_\infty: \prod_{\mathbb{N}} A \to A_\infty
\]
the quotient map, and by $\iota_\infty:A \to A_\infty$ the canonical embedding as constant sequences.

\begin{prop}
\label{AsympSequence}
Let $\alpha:A \to B$ be a $^*$-homomorphism.

Then,
\[ \dr(\alpha) = \dr(\iota_\infty \circ \alpha) \]
and
\[\dn(\alpha) = \dn(\iota_\infty \circ \alpha). \]
\end{prop}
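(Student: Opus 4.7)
The plan is to prove two inequalities, for each of $\dr$ and $\dn$. The direction $\dr(\iota_\infty \circ \alpha) \leq \dr(\alpha)$ (and the $\dn$ analogue) is immediate: any approximation of $\alpha$ by a factorization through a finite-dimensional $F$ composes with the isometric $^*$-homomorphism $\iota_\infty$ to yield an approximation of $\iota_\infty \circ \alpha$ via the same $\psi$ and via $\iota_\infty \circ \phi$, which inherits the colour decomposition of $F$ and all contractivity properties from $\phi$.

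For the reverse inequality $\dn(\alpha) \leq \dn(\iota_\infty \circ \alpha)$, my strategy is to lift approximations from $B_\infty$ back to $B$. Fix $\F \subset A$ finite and $\e > 0$, and suppose $\dn(\iota_\infty \circ \alpha) \leq n$: this gives c.p.c.\ $\psi \colon A \to F = F^{(0)} \dsum \cdots \dsum F^{(n)}$ and c.p.\ $\phi \colon F \to B_\infty$ with each $\phi|_{F^{(i)}}$ c.p.c.\ order zero and $\|\iota_\infty \alpha(a) - \phi\psi(a)\| < \e$ on $\F$. I would then apply the standard order-zero lifting lemma to each colour $\phi|_{F^{(i)}}$ separately, obtaining sequences $(\phi^{(i)}_k)_k$ of c.p.c.\ order zero maps $F^{(i)} \to B$ whose image under $\pi_\infty$ is $\phi|_{F^{(i)}}$. (This lifting is a known consequence of the correspondence between c.p.c.\ order zero maps out of $F^{(i)}$ and $^*$-homomorphisms from the cone $C_0((0,1]) \tens F^{(i)}$, combined with projectivity of such cones.) Setting $\phi_k := \sum_i \phi^{(i)}_k \colon F \to B$, each restriction $\phi_k|_{F^{(i)}}$ is c.p.c.\ order zero by construction; choosing $k$ large enough ensures $\|\alpha(a) - \phi_k \psi(a)\| < \e$ for $a \in \F$, which establishes $\dn(\alpha) \leq n$.

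For decomposition rank the only additional requirement is that $\phi_k$ itself be contractive, not merely its restrictions. Since $\|\phi\| \leq 1$ in $B_\infty$, one has $\limsup_k \|\phi_k\| \leq 1$, so for sufficiently large $k$ one rescales $\phi_k$ by $(1+\e)^{-1}$: this produces a c.p.c.\ map whose restrictions to each $F^{(i)}$ remain c.p.c.\ order zero (positive scalar multiples preserve the order zero condition), at the cost of perturbing the approximation of $\alpha$ by a controllable multiple of $\e \cdot \max_{a \in \F} \|a\|$; this can be absorbed by starting with a slightly smaller tolerance.

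The main technical input is the order-zero lifting lemma for finite-dimensional domains into sequence algebras; this is the only substantive step and I would expect to simply cite it. Modulo that, the argument is routine bookkeeping with approximating factorizations, with only the rescaling trick requiring attention when passing from nuclear dimension to decomposition rank.
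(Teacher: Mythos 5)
Your argument is correct and is precisely what the paper's one-line proof alludes to: the cited ``stability of the relations defining c.p.c.\ order zero maps on finite dimensional domains'' is exactly your order-zero lifting lemma, and the remaining steps (componentwise order zero of the lift, choosing $k$ large on a finite set, and the rescaling to recover contractivity of the full $\phi_k$ in the decomposition rank case) are the routine bookkeeping the authors leave implicit. No substantive difference in approach.
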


\begin{proof}
Straightforward, using stability of the relations defining c.p.c.\ order zero maps on finite dimensional domains \cite{KirchbergWinter:CovDim}.
\end{proof}

\begin{prop}\label{SSA-TensorFormula}
Let $\mathcal{D}$ be a strongly self-absorbing $\mathrm{C}^*$-algebra, as defined in \cite{TomsWinter:ssa}, and let $A$ be a $\mathcal{D}$-stable $\mathrm{C}^*$-algebra. 

Then
\[ \dr(A) = \dr(\id_{A} \otimes 1_{\mathcal{D}})  \]
and
\[\dn(A) = \dn(\id_{A} \otimes 1_{\mathcal{D}}). \]
\end{prop}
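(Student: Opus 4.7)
The plan is to prove the two inequalities $\dr(\id_A \otimes 1_{\mathcal{D}}) \leq \dr(A)$ and $\dr(A) \leq \dr(\id_A \otimes 1_{\mathcal{D}})$ separately, and similarly for $\dn$. The main tool for the nontrivial direction will be the characterization of $\mathcal{D}$-stability from \cite{TomsWinter:ssa}: if $\mathcal{D}$ is strongly self-absorbing and $A$ is $\mathcal{D}$-stable, then there exists an isomorphism $\sigma: A \to A \tens \mathcal{D}$ which is approximately unitarily equivalent to the embedding $\id_A \tens 1_{\mathcal{D}}$.

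For the direction $\dr(\id_A \tens 1_{\mathcal{D}}) \leq \dr(A)$, I would take an $(n+1)$-colourable c.p.c.\ approximation
\[
\ntflex{A}{A}{F = F^{(0)} \dsum \ldots \dsum F^{(n)}}{\psi}{\phi}{\id_A}
\]
of $\id_A$ up to some tolerance $\e$ on a finite set $\F \subset A$, and simply replace $\phi$ with $(\id \tens 1_{\mathcal{D}}) \circ \phi : F \to A \tens \mathcal{D}$. Because tensoring a c.p.c.\ order zero map with a $^*$-homomorphism preserves both the c.p.c.\ property and order zero, the new $\phi$ remains $(n+1)$-colourable (and each summand remains contractive, giving the $\dn$ version as well). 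This new pair approximates $\id_A \tens 1_{\mathcal{D}}$ on $\F$ up to $\e$.

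For the reverse inequality $\dr(A) \leq \dr(\id_A \tens 1_{\mathcal{D}})$, suppose we have an $(n+1)$-colourable approximation $(F,\psi,\phi)$ of $\id_A \tens 1_{\mathcal{D}}$ on $\F$ up to $\e$. I would compose with $\sigma^{-1} : A\tens \mathcal{D} \to A$, obtaining $\sigma^{-1} \circ \phi : F \to A$; post-composition with the $^*$-isomorphism $\sigma^{-1}$ is irrelevant to the colouring and contractivity of each $\phi|_{F^{(i)}}$. The point-norm estimate becomes
\[
\|\sigma^{-1}\phi\psi(a) - \sigma^{-1}(\id_A\tens 1_{\mathcal{D}})(a)\| < \e \quad (a \in \F).
\]
Then approximate unitary equivalence $\sigma \approx_{\mathrm{a.u.}} \id_A \tens 1_{\mathcal{D}}$ implies $\sigma^{-1}\circ(\id_A\tens 1_{\mathcal{D}}) \approx_{\mathrm{a.u.}} \id_A$, so for a suitable unitary $u \in A^\sim$ we get $\|u^*\sigma^{-1}(a\tens 1_{\mathcal{D}})u - a\| < \e$ on $\F$. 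Conjugating the approximation by $u$ (again innocuous for colouring and contractivity) yields an $(n+1)$-colourable c.p.c.\ approximation of $\id_A$ on $\F$ up to $2\e$, which is enough.

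The step I expect to require the most care is ensuring that the codomain manipulations, namely post-composing with $\sigma^{-1}$ and conjugating by $u$, indeed do not disturb the colouring or the per-summand contractivity used to separate $\dr$ from $\dn$; this is however routine, as $\sigma^{-1}$ is a $^*$-homomorphism and $\mathrm{Ad}(u)$ is a $^*$-isomorphism, and both preserve order zero as well as contractivity of c.p.c.\ maps. With these verifications, the same proof establishes the statement for $\dn$ verbatim, since only contractivity of each $\phi|_{F^{(i)}}$ is used rather than of $\phi$ itself.
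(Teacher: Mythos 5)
Your argument is correct and is essentially the paper's (very brief) proof: the easy inequality post-composes an approximation of $\id_A$ with the first-factor embedding, while the converse rests on the fact that $\id_A \otimes 1_{\mathcal{D}}$ followed by a suitable $^*$-isomorphism $A \otimes \mathcal{D} \to A$ approximates $\id_A$ in point-norm, which is exactly the approximate-unitary-equivalence statement from \cite{TomsWinter:ssa} that you invoke and then implement via conjugation by a unitary. Since post-composition with a $^*$-isomorphism and with $\mathrm{Ad}(u)$ preserves order zero, colourability and per-summand contractivity, both the $\dr$ and $\dn$ identities follow as you say.
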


\begin{proof}
This follows easily from the fact that $\id_{\mathcal{D}}$ has approximate factorizations of the form
\[ \mathcal{D} \labelledrightarrow{\id_{\mathcal{D}} \tens 1_{\mathcal{D}}} \mathcal{D} \tens \mathcal{D} \labelledrightarrow{\phi} A \tens \mathcal{D}, \]
where $\phi$ is a $^*$-isomorphism.
\end{proof}

\section{\texorpdfstring{$C(X)$}{C(X)}-algebras and decomposition rank}
\label{CXAlgSect}

\noindent
For a locally compact Hausdorff space $X$, a $C_{0}(X)$-algebra is a $\mathrm{C}^*$-algebra $A$ equipped with a nondegenerate $^*$-homomorphism $C_{0}(X) \to Z\mathcal{M}(A)$, called the structure map \ccite{Definition 1.5}{Kasparov:KK}. 
Here, $\mathcal{M}(A)$ refers to the multiplier algebra of $A$ and $Z\mathcal{M}(A)$ to its centre; note that if $A$ is unital, then so is the structure map. In this section, we study the decomposition rank of such structure maps. 
Proposition \ref{AbelianInclusionDim} below is reminiscent of \ccite{Proposition 2.19}{Winter:CovDim1} which shows that the completely positive rank of $C(X)$ equals the covering dimension of $X$.

\begin{defn}
Let $A$ be a $C(X)$-algebra and let $a \in A$.
Define the support of $a$ to be the smallest closed set $F \subset X$ such that $ag = 0$ whenever $g \in C_0(X \setminus F) \subset C(X)$. (This is easily seen to be well-defined.)
\end{defn}

\begin{prop}
\label{AbelianInclusionDim}
Let $X$ be a compact Hausdorff space, and let $A$ be a unital $C(X)$-algebra with structure map $\iota:C(X) \to Z(A)$.

The following are equivalent.
\begin{enumerate}
\item $\dr(\iota) \leq n$.
\item $\dn(\iota) \leq n$.
\item The definition of $\dr(\iota) \leq n$ holds with the additional requirements that $F$ is abelian and $\psi$ is a unital $^*$-homomorphism.
\item For any finite open cover $\mathcal{U}$ of $X$, any $\e > 0$, and any $b \in C(X)$, there exists an $(n+1)$-colourable $\epsilon$-approximate finite partition of $b$; that is, positive elements $b_j^{(i)} \in A$ for $i=0,\dots,n, j=1,\dots,r$, such that
\begin{enumerate}
\item for each $i$, the elements $b_1^{(i)},\dots,b_r^{(i)}$ are pairwise orthogonal,
\item for each $i,j$, the support of $b_j^{(i)}$ is contained in some open set in the given cover $\mathcal{U}$,
\item $\| \sum_{i,j} b_j^{(i)} - \iota(b) \| \leq \e$.
\end{enumerate}
\end{enumerate}
\end{prop}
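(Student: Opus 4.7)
My plan is to prove the cycle (iii) $\Rightarrow$ (i) $\Rightarrow$ (ii) $\Rightarrow$ (iv) $\Rightarrow$ (iii). The first two implications are immediate from the definitions: (iii) strengthens (i) by restricting $F$ to be abelian and $\psi$ to be a unital $^*$-homomorphism, while $\dr(\iota) \leq n$ implies $\dn(\iota) \leq n$ since the former additionally requires $\phi$ itself to be contractive.

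For (iv) $\Rightarrow$ (iii), given a finite $\mathcal{F} \subset C(X)$ and a tolerance $\e > 0$, I would use compactness of $X$ and uniform continuity to choose an open cover $\mathcal{U}$ on which each $f \in \mathcal{F}$ oscillates by at most $\e/2$. Applying (iv) with $b = 1$ and this cover produces orthogonal positive elements $b^{(i)}_j \in A$ whose supports lie in sets of $\mathcal{U}$. Set $F = \bigdsum_{i,j} \C e^{(i)}_j$ with the natural colouring $F^{(i)} = \bigdsum_j \C e^{(i)}_j$, pick representatives $x^{(i)}_j$ in the supporting open sets, and define the unital $^*$-homomorphism $\psi(f) = \sum_{i,j} f(x^{(i)}_j) e^{(i)}_j$ and the c.p.\ map $\phi(e^{(i)}_j) = b^{(i)}_j$. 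Orthogonality within each colour makes $\phi|_{F^{(i)}}$ c.p.c.\ order zero, and $\|\sum b^{(i)}_j\| \leq 1 + \e$ delivers contractivity of $\phi$ after a harmless rescaling absorbed into $\e$. The approximation $\phi\psi(f) \approx \iota(f)$ follows because for each summand the bound $\|(\iota(f) - f(x^{(i)}_j))b^{(i)}_j\|$ is controlled by the oscillation of $f$ on $\mathrm{supp}(b^{(i)}_j)$, using Tietze extension to realize the restriction and centrality of $\iota(C(X))$ to pass it across $b^{(i)}_j$; the terms then sum correctly via orthogonality within each colour.

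For (ii) $\Rightarrow$ (iv), given $b \in C(X)$, an open cover $\mathcal{U}$, and $\e > 0$, apply (ii) to a finite set containing $b$ together with a partition of unity $\{h_k\}$ subordinate to a refinement of $\mathcal{U}$, with small tolerance $\e'$. The resulting $\psi: C(X) \to F = \bigdsum_i F^{(i)}$ and $\phi: F \to A$ has $\phi^{(i)} := \phi|_{F^{(i)}}$ c.p.c.\ order zero; the structure theorem for order zero maps expresses $\phi^{(i)}(x) = h^{(i)} \pi^{(i)}(x)$, with $\pi^{(i)}$ a $^*$-homomorphism from $F^{(i)}$ into a multiplier algebra and $h^{(i)}$ a positive contraction commuting with its image. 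Decomposing $\psi(b)$ by colour and matrix summand $F^{(i,l)} \cong M_{n_l}$ and diagonalizing each block $\psi(b)^{(i,l)}$ by unitary conjugation yields orthogonal rank-one positive elements whose $\phi^{(i,l)}$-images are pairwise orthogonal within each colour (using order zero together with orthogonality of distinct matrix summands) and sum to $\phi\psi(b) \approx \iota(b)$. To enforce the support condition, cut each of these elements by central multipliers $\iota(f_k)$ coming from a partition of unity subordinate to $\mathcal{U}$; centrality of $\iota(C(X)) \subset Z(A)$ ensures these cuts preserve orthogonality within each colour.

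The main obstacle is (ii) $\Rightarrow$ (iv): reconciling the prescribed support condition with orthogonality within each colour is technically delicate, because naively cutting by a partition of unity tends to destroy orthogonality across pieces that see overlapping open sets. Resolving this requires carefully interleaving the diagonalization of $\psi(b)$, the structure theorem applied to each $\phi^{(i)}$, and the centrality of $\iota(C(X))$ so that the resulting central cutoffs refine each orthogonal block without mixing different contributions within a colour.
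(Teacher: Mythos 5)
The cycle you propose is the same as the paper's, and your (iv) $\Rightarrow$ (iii) argument (point evaluations into an abelian $F$, multiplication by the $b^{(i)}_j$, oscillation control via uniform continuity) is essentially the paper's proof of that implication. The problem is (ii) $\Rightarrow$ (iv), where you name the obstacle correctly but do not overcome it, and this obstacle is precisely the content of the proposition. After diagonalizing $\psi(b)$ and pushing the spectral pieces through the order zero maps, you obtain elements that are orthogonal within each colour but whose supports can be all of $X$; your proposed fix is to cut them by central multipliers $\iota(f_k)$ for a partition of unity $\{f_k\}$ subordinate to $\mathcal{U}$. This fails: for a fixed orthogonal piece $c$ and two overlapping members $f_k, f_{k'}$ of the partition of unity, the elements $\iota(f_k)c$ and $\iota(f_{k'})c$ are not orthogonal, so within a single colour the family $\{\iota(f_k)c_j\}_{k,j}$ is no longer pairwise orthogonal. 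Restoring orthogonality by colouring the $f_k$'s would cost roughly $\dim X + 1$ extra colours, which is exactly what the proposition must avoid. Saying the resolution "requires carefully interleaving" the structure theorem with centrality is not a proof; no amount of interleaving makes a posteriori central cutoffs compatible with orthogonality.

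The paper's mechanism is genuinely different and is the key idea: the support condition is \emph{not} imposed by multiplying by cutoff functions, but emerges from a norm estimate plus functional calculus. One feeds the partition of unity $\F$ itself (subordinate to $\mathcal{U}$) into the approximation with tolerance $\eta = \e/(2|\F|(n+1))$, takes the orthogonal pieces $\phi(\psi(1)1_{M_{m(i,j)}})$ indexed by the matrix blocks, and sets $a^{(i)}_j := \bigl(\phi(\psi(1)1_{M_{m(i,j)}}) - \tfrac{\e}{2(n+1)}\bigr)_+$. Letting $f_{i,j} \in \F$ maximize $\|\psi(f)1_{M_{m(i,j)}}\|$, a multiplicativity property of order zero maps restricted to a single matrix block (quoted from Kirchberg--Winter) shows that on the zero set $K$ of $f_{i,j}$ one has $\|\phi(\psi(1)1_{M_{m(i,j)}})|_K\| \leq \e/(2(n+1))$, so the cut-down vanishes on $K$ and its support sits inside $\mathrm{supp}\, f_{i,j}$, hence inside a member of $\mathcal{U}$ --- while orthogonality within each colour is preserved because the $1_{M_{m(i,j)}}$ are orthogonal and functional-calculus cut-downs respect orthogonality. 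Without this (or an equivalent) device, your proof of (ii) $\Rightarrow$ (iv) has a genuine gap at its central step.
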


\begin{proof}
(iii) $\Rightarrow$ (i) $\Rightarrow$ (ii) is obvious.

(ii) $\Rightarrow$ (iv):
Let us first assume $b=1$. Let $\F$ be a finite partition of unity such that, for each $f \in \F$, there exists $U_f \in \mathcal{U}$ such that $\mathrm{supp}\, f \subset U_f$.
Set 
\begin{equation}
\label{AbelianInclusionDim-etaDefn}
\eta := \frac{\e}{2|\F|(n+1)}.
\end{equation}
Use $\dn(\iota) \leq n$ to obtain 
\[ C(X) \labelledrightarrow{\psi} F^{(0)} \dsum \ldots \dsum F^{(n)} \labelledrightarrow{\phi} A \]
such that $\psi$ is c.p.c., $\phi|_{F^{(i)}}$ is c.p.c.\ and order zero for all $i=0,\dots,n$, $\phi(\psi(f)) =_\eta f$ for $f \in \F$, and $\phi(\psi(1)) =_{\e/2} 1$.
Let 
\[
F^{(i)} = \bigdsum_{j=1}^{r_i} M_{m(i,j)}.
\] 
(By throwing in some zero summands if necessary, we may as well assume all the $r_{i}$'s to be equal.) 

For each $i=0,\dots,n$ and $j=1,\dots,r_i$, we set
\[ a_j^{(i)} := \left(\phi(\psi(1_{C(X)})1_{M_{m(i,j)}}) - \frac{\e}{2(n+1)}\right)_+. \]
For each $i$, since $\phi|_{F^{(i)}}$ is order zero, $a_1^{(i)},\dots,a_{r_i}^{(i)}$ are orthogonal.
We estimate
\[
\begin{array}{rll}
1 &=_{\e/2}& \phi(\psi(1)) \\
&=& \displaystyle \sum_{i=0}^n \sum_{j=1}^{r_i} \phi(\psi(1)1_{M_{m(i,j)}}) \\
& =_{\frac{(n+1)\e}{2(n+1)}}& \displaystyle\sum_{i,j} a_j^{(i)},
\end{array}
\]
where the last approximation is obtained using the fact that the inner summands are orthogonal.

Lastly, we must verify that each $a_j^{(i)}$ has support contained in an open set from the cover $\mathcal{U}$.
Fix $i$ and $j$.
Let $f_{i,j} \in \F$ maximize $f \mapsto \|\psi(f)1_{M_{m(i,j)})}\|$.
We shall show that the support of $a_j^{(i)}$ is contained in the support of $f_{i,j}$ by showing that $a_j^{(i)}|_K = 0$, where
\[ K := \{x \in X: f_{i,j} = 0\}. \]

Since $1 = \sum_{f \in \F} f$, we must have 
\begin{equation}
 \|\psi(f_{i,j})1_{M_{m(i,j)}}\| \geq \frac1{|\F|}\|\psi(1)1_{M_{m(i,j)}}\|.
\label{AbelianInclusionDim-ProofNormIneq1}
\end{equation}
Noting that
\[
\begin{array}{rll}
\displaystyle f_{i,j} &=_{\eta}&  \phi(\psi(f_{i,j})) \\
\displaystyle&\geq& \phi(\psi(f_{i,j})1_{M_{m(i,j)}}),
\end{array}
\]
we must have
\begin{equation}
\label{AbelianInclusionDim-ProofNormIneq2}
\|\phi(\psi(f_{i,j})1_{M_{m(i,j)}})|_K\| \leq \eta.
\end{equation}
We get
\begin{eqnarray*}
\displaystyle \|\phi(\psi(1)1_{M_{m(i,j)}})|_K\| &=& \|\phi(1_{M_{m(i,j)}})|_K\|\,\|\psi(1)1_{M_{m(i,j)}}\| \\
\displaystyle &\stackrel{\eqref{AbelianInclusionDim-ProofNormIneq1}}{\leq}& \|\phi(1_{M_{m(i,j)}})|_K\|\,|\F|\,\|\psi(f_{i,j})1_{M_{m(i,j)}}\| \\
\displaystyle &= &\|\phi(\psi(f_{i,j})1_{M_{m(i,j)}})|_K\| \\
\displaystyle &\stackrel{\eqref{AbelianInclusionDim-etaDefn}, \eqref{AbelianInclusionDim-ProofNormIneq2}}{\leq}& \frac\e{2(n+1)},
\end{eqnarray*}
where the equalities on the first and third line are obtained as noted in the proof of \ccite{Proposition 5.1}{KirchbergWinter:CovDim} (6$^\text{th}$ line from the bottom of page 79); therefore, $a_j^{(i)}|_K = 0$, as required.

If $b$ is not the unit, we may still assume that $\|b\| \leq 1$ and use the argument above to obtain an $(n+1)$-colourable approximate partition of unity $(a_j^{(i)})$ subordinate to $\mathcal{U}$.
Then simply set $b_j^{(i)} = ba_j^{(i)}$.

(iv) $\Rightarrow$ (iii):
It will suffice to prove the condition in (iii) assuming that $\F$ consists of self-adjoint contractions.

Take an open cover $\mathcal{U}$ of $X$ along with points $x_{U} \in U$ for every $U \in \mathcal{U}$  such that, for any $f \in \F$, $U \in \mathcal{U}$ and $x \in U$,
\begin{equation}
\label{AbelianInclusionDim-ApproxConst}
 |f(x) - f(x_U)| < \frac\e2.
\end{equation}
Use (iv) with $b=1$ to find an $(n+1)$-colourable $\frac\e2$-approximate partition of unity 
\[
(a_j^{(i)})_{i=0,\ldots,n;\, j=1,\ldots,r}
\]
subordinate to $\mathcal{U}$.
By a standard rescaling argument, we may assume that $\sum a_j^{(i)} \leq 1$.
For each $i,j$, let $U(i,j) \in \mathcal{U}$ be such that $\mathrm{supp}\, a_j^{(i)} \subset U(i,j)$.

Define $\psi:C(X) \to (\mathbb{C}^r)^n$ by
\[ \psi(f) = (f(x_{U(i,j)}))_{i=0,\dots,n;\, j=1,\dots,r} \]
and define $\phi:(\mathbb{C}^r)^n \to C(X,A)$ by
\[ \phi(\lambda_{i,j})_{i=0,\dots,n;\, j=1,\dots,r} = \sum_{i,j} \lambda_{i,j} \cdot a_j^{(i)}. \]
Clearly, $\psi$ is a $^*$-homomorphism, while $\phi$ is c.p.c.\ and its restriction to each copy of $\mathbb{C}^{r}$ is order zero.

To verify that $\phi \circ \psi$ approximates $\theta$ in the appropriate sense, fix $f \in \F$ and $x \in X$.
We shall show that $\|\phi\psi(f)(x) - f(x)\| < \e$ (in the fibre $A(x)$).
Let
\[ S = \{(i,j) \in \{0,\dots,n\} \times \{1,\dots,r\}: x \in U(i,j)\}, \]
so that
\[ \phi(\psi(f))(x) = \sum_{(i,j) \in S} f(x_{U(i,j)}) \cdot a_j^{(i)}(x), \]
and
\[ 1 =_{\e/2} \sum_{(i,j) \in S} a_j^{(i)}(x). \]
By \eqref{AbelianInclusionDim-ApproxConst},
\begin{eqnarray*}
 (f(x)-\e/2) \cdot \sum_{(i,j) \in S} a_j^{(i)}(x) &\leq& \sum_{(i,j) \in S} f(x_{U(i,j)}) \cdot a_j^{(i)}(x) \\
&\leq& (f(x) + \e/2) \cdot \sum_{(i,j) \in S} a_j^{(i)}(x).
\end{eqnarray*}
It follows that
\[
\begin{array}{rll}
 \phi(\psi(f)) &=& \displaystyle \sum_{(i,j) \in S} f(x_{U(i,j)}) \cdot a_j^{(i)} \\
&=_{\e/2}& \displaystyle f(x) \cdot \sum_{(i,j) \in S} a_j^{(i)} \\
&=_{\e/2}& f(x),
\end{array}
\]
as required.
\end{proof}

\begin{prop}
\label{O2CtsField}
Let $X$ be a locally compact metrizable space with finite covering dimension, and let $A$ be a $C_0(X)$-algebra all of whose fibres are isomorphic to $\cO_2$.
Let $U \subset X$ be an open subset such that $\overline{U}$ is compact.

Then $C_0(U)A \iso C_0(U,\cO_2)$ as $C_0(U)$-algebras.
\end{prop}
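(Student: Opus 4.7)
The plan is to reduce the statement to a compact base and then invoke the trivialization theorem for $C(Y)$-algebras with $\mathcal{O}_2$-fibres proved in \cite{KirchbergRordam:pi3}: any separable $C(Y)$-algebra, over a compact metrizable space $Y$ of finite covering dimension, all of whose fibres are isomorphic to $\mathcal{O}_2$, is $C(Y)$-linearly isomorphic to $C(Y, \mathcal{O}_2)$.

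Concretely, set $Y := \overline{U}$ and form the restriction
\[
A_Y := A / \bigl( C_0(X \setminus Y) \cdot A \bigr),
\]
which is naturally a $C(Y)$-algebra whose fibre at each $y \in Y$ coincides with that of $A$ and is therefore $\mathcal{O}_2$. Since $Y$ is closed in $X$ and is assumed compact, it is a compact metrizable space of finite covering dimension, so the hypotheses of the Kirchberg--R{\o}rdam theorem can be checked on $A_Y$. Because $U \subset Y$, the structure map $C_0(U) \to Z\mathcal{M}(A)$ factors through the quotient $A \to A_Y$, and in particular $C_0(U) A = C_0(U) A_Y$. Thus a $C(Y)$-isomorphism $A_Y \iso C(Y, \mathcal{O}_2)$ pulls back, upon multiplying by $C_0(U) \subset C(Y)$ on both sides, to the desired $C_0(U)$-isomorphism $C_0(U) A \iso C_0(U, \mathcal{O}_2)$.

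The real content is thus in checking the hypotheses of the cited trivialization theorem. Separability of $A_Y$ is the standing assumption in the present context. More delicate is that Kirchberg--R{\o}rdam's classification is stated for continuous $C(Y)$-bundles, whereas the definition of a $C(Y)$-algebra only guarantees upper semicontinuity of the norm functions $y \mapsto \|a(y)\|$. Here, however, continuity is automatic: if $\|a(y_0)\| > \lambda$, then $(|a|-\lambda)_+$ is nonzero in the simple fibre $A_Y(y_0) \iso \mathcal{O}_2$, so one can express its unit, up to arbitrarily small error, as a sum $\sum_i b_i (|a|-\lambda)_+ c_i$ with $b_i, c_i \in A_Y$; by definition of the quotient $A_Y \to A_Y(y_0)$, this element has norm tending to $1$ as $y \to y_0$, which forces $\|a(y)\| > \lambda$ throughout a neighbourhood of $y_0$ and yields lower semicontinuity.

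The principal obstacle in the argument, such as it is, is simply to match the precise formulation of the trivialization theorem in \cite{KirchbergRordam:pi3} to the present setup; once continuity of the bundle is in hand, everything else is bookkeeping.
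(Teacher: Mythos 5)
Your reduction is exactly the one the paper uses: pass to the restriction $A|_{\overline{U}}$ over the compact metrizable finite-dimensional space $\overline{U}$, trivialize there, and recover $C_0(U)A$ as the ideal corresponding to $U$. The one substantive problem is the attribution of the trivialization theorem: the statement you invoke (every separable $C(Y)$-algebra over a finite-dimensional compact metrizable $Y$ with all fibres $\iso \cO_2$ is $C(Y)$-isomorphic to $C(Y,\cO_2)$) is not proved in \cite{KirchbergRordam:pi3}; it is \ccite{Theorem 1.1}{Dadarlat:CtsFields}, which is what the paper cites. The result actually taken from \cite{KirchbergRordam:pi3} elsewhere in this article is Proposition 3.7, a quite different (subalgebra-type) statement about $C(X,\cO_2)$, so the ``bookkeeping'' you defer to --- matching your setup to a theorem in that paper --- cannot be carried out there. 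With the reference corrected, your argument goes through; your observation that lower semicontinuity of $y\mapsto\|a(y)\|$ is automatic for simple unital fibres (via upper semicontinuity applied to $1-\sum_i b_i(|a|-\lambda)_+c_i$) is a reasonable way to reconcile the $C(Y)$-algebra and continuous-field formulations, a point the paper silently absorbs into the citation.
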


\begin{proof}
\ccite{Theorem 1.1}{Dadarlat:CtsFields} says that $A|_{\overline{U}} \iso C(\overline{U},\cO_2)$, as $C(\overline{U})$-algebras.
Viewing $C_0(U)A$ as an ideal of $A|_{\overline{U}}$, the result follows.
\end{proof}

\section{Decomposition rank of \texorpdfstring{$C_0(X,\jsZ)$}{C(X,Z)}}
\label{MainThmSect}

\noindent
In this section, we prove our main result:

\begin{thm}
\label{AHZdr}
Let $A$ be a $\mathrm{C}^*$-algebra which is locally approximated by hereditary subalgebras of $\mathrm{C}^*$-algebras of the form $C(X,\mathcal{K})$, with $X$ compact Hausdorff. 

Then
\[ \dr(A \tens \jsZ) \leq 2. \]
In particular, any $\mathcal{Z}$-stable $\mathrm{AH}$ $\mathrm{C}^{*}$-algebra has decomposition rank at most $2$. 
\end{thm}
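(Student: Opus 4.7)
The plan is as follows. First I would reduce to showing $\mathrm{dr}(C(X)\otimes\jsZ)\leq 2$ for arbitrary compact Hausdorff $X$. Applying Proposition~\ref{drPermanence}(i) to the family $(A_\lambda\otimes\jsZ)$ obtained by tensoring the given local approximation of $A$ with $\jsZ$, and then Proposition~\ref{drPermanence}(ii) for the hereditary subalgebras, passes us from the general $A$ to $A=C(X,\K)$. Decomposition rank is stable under $\K$-tensoring---one direction by hereditary permanence applied to a rank-one corner, the other by local approximation of $\K$ by matrix blocks via Proposition~\ref{drPermanence}(i)---so it is enough to bound $\mathrm{dr}(C(X)\otimes\jsZ)$.

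Second, I would regard $C(X)\otimes\jsZ$ as a unital $C(X)$-algebra. Given a finite subset $\F\subset C(X)\otimes\jsZ$ and a tolerance $\e>0$, one picks a finite open cover $\mathcal{U}$ of $X$ together with base points $x_U\in U$ so that every element of $\F$ varies by less than $\e$ across each $U$ in the fibre norm of $\jsZ$. The task then splits naturally into two pieces: a $3$-colourable approximate partition of unity of $C(X)\otimes\jsZ$ subordinate to $\mathcal{U}$ (of exactly the kind appearing in the characterisation of Proposition~\ref{AbelianInclusionDim}(iv)), and finite-dimensional c.p.c.\ approximations of the fibre values $a(x_U)\in\jsZ$ using the known bound $\mathrm{dr}(\jsZ)\leq 2$. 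The technical Lemma~\ref{DimDropUnit}, flagged in the introduction as the heart of the argument, provides the partition of unity---indeed, somewhat more, since it must interact well with such fibrewise approximations---with a colour count of exactly~$3$ irrespective of $\dim X$. Assembling these produces a c.p.c.\ factorisation of $\mathrm{id}_{C(X)\otimes\jsZ}$ on $\F$ through a finite-dimensional C*-algebra, decomposed into three order-zero colours.

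Finally, the main obstacle, and the content of Lemma~\ref{DimDropUnit}, is a genuine \emph{dimension collapse}: a naive bundle-theoretic estimate would make the colour count grow with $\dim X$, and one must exploit $\jsZ$-stability to absorb the base dimension into the fibre. As outlined in the introduction, the way to do this is to approximately embed the cone $C\cO_2$ into tracially small hereditary subalgebras of $C(X)\otimes\jsZ$; the quasidiagonality of $C\cO_2$ then bridges the purely infinite Kirchberg--R{\o}rdam reduction for $C(X)\otimes\cO_2$ (which is essentially $1$-dimensional over its base, cf.\ Proposition~\ref{O2CtsField}) with the stably finite $\jsZ$-stable setting of the theorem. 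The delicate colour-counting in this transfer is where I expect essentially all the genuine work of the proof to be concentrated; the final sentence of the theorem, concerning locally homogeneous $\jsZ$-stable algebras, is then immediate since every AH algebra is locally approximated by $C(X_i,\K)$'s in the required sense.
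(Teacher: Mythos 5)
Your overall architecture (reduce to $C(X)\otimes\jsZ$, view it as a $C(X)$-algebra, and split the problem into a base-space partition of unity plus fibrewise finite-dimensional approximations) matches the paper's, and your description of the role of Lemma~\ref{DimDropUnit} and of the quasidiagonal cone over $\cO_2$ is accurate as far as it goes. But there is a genuine gap at the assembly stage. You propose to combine a $3$-colourable approximate partition of unity of the base with ``finite-dimensional c.p.c.\ approximations of the fibre values $a(x_U)\in\jsZ$ using the known bound $\dr(\jsZ)\leq 2$.'' If the fibre approximations are themselves multi-coloured c.p.\ approximations, the colours multiply: composing a $3$-coloured base decomposition with a $(d+1)$-coloured fibre decomposition yields $3(d+1)$ colours, i.e.\ at best the tensor-product estimate $\dr\leq(\dr C(X)\tens 1)(\dr\jsZ+1)\cdot 3-1$, which is nowhere near $2$. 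Saying that Lemma~\ref{DimDropUnit} ``must interact well with such fibrewise approximations'' names the problem without solving it; the lemma as stated gives only the partition of unity, not a mechanism for making the fibre approximations colour-free.

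The paper's resolution (its Step 2) is the missing idea: one does \emph{not} approximate the fibre $\jsZ$ by c.p.c.\ maps at all. Instead one writes $\jsZ=\varinjlim\jsZ_{p,q}$ and uses that the dimension-drop algebra $\jsZ_{p,q}\subset C([0,1],M_p\tens M_q)$ admits honest irreducible $^*$-homomorphisms $\ev_t$ onto single matrix blocks. Each piece $a^{(i)}_j$ of the partition of unity is then paired with one point-evaluation $\ev_{t^{(i)}_j}(\,\cdot\,(x^{(i)}_j))$, which costs zero additional colours; the price is that the partition of unity must be taken over the enlarged base $X\times[0,1]$ (with a cover meeting at most one of the two boundary components, so that the maps $T\mapsto a^{(i)}_j\tens T\tens 1$ land back in $C(X,\jsZ_{p,q})\tens\jsZ$). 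This works precisely because the colour count $3$ delivered by Lemma~\ref{DimDropUnit} is independent of the dimension of the base, so enlarging $X$ to $X\times[0,1]$ is free. Without this device (or an equivalent one) your outline proves at most $\dr(C(X)\otimes\jsZ)<\infty$, not $\leq 2$. Two smaller points: the Kirchberg--R{\o}rdam input is Theorem~\ref{O2dimnuc} rather than Proposition~\ref{O2CtsField} (the latter is Dadarlat's triviality of $\cO_2$-bundles, used inside Lemma~\ref{DimDropUnit}); and since Lemma~\ref{DimDropUnit} is stated only for $X=[0,1]^m$, you still need the reduction from arbitrary compact Hausdorff $X$ to finitely generated $C(X)$ and then to quotients of $C([0,1]^m)$.
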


In our proof, we will make use of the huge amount of space provided by the noncommutative fibres in two ways. First, we exhaust the identity on $X$ by pairwise orthogonal functions up to a tracially small hereditary subalgebra. This will be designed to host an algebra of the form $C_{0}(Z) \otimes \mathcal{O}_{2}$, which is possible by quasidiagonality of the cone over $\mathcal{O}_{2}$. The first factor embedding of $C_{0}(Z)$ into the latter can be approximated by $2$-colourable maps as shown by Kirchberg and R{\o}rdam (see below). Together with the initial set of functions we obtain a $3$-colourable, hence $2$-dimensional, approximation of the first factor embedding of $C(X)$ into $C(X) \otimes \mathcal{Z}$. 

We will first carry out this construction with a UHF algebra in place of $\mathcal{Z}$; a slight modification will then allow us to pass to certain $C([0,1])$-algebras with UHF fibres, which immediately yields the general case.  

As noted above, a result of Kirchberg and R{\o}rdam \ccite{Proposition 3.7}{KirchbergRordam:pi3} on $1$-dimensional approximations in the case of $\mathcal{O}_{2}$-fibred bundles is a crucial ingredient; this in turn relies on the fact that the unitary group of $C(S^{1},\mathcal{O}_{2})$ is connected \cite{Cuntz:KOn}. We note the following direct consequence which is more adapted to our needs.

\begin{thm}\label{O2dimnuc}
For any locally compact Hausdorff space $X$, the decomposition rank of the first factor embedding $C_0(X) \to C_0(X,\cO_2)$ is at most one.
\end{thm}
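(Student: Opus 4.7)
The plan is to reduce to compact (and then compact metrizable) $X$, use Proposition~\ref{AbelianInclusionDim} to convert $\dr(\iota) \leq 1$ into the existence of $2$-colourable approximate partitions of unity in $C(X, \cO_2)$ subordinate to any finite open cover, and finally to invoke the Kirchberg--R{\o}rdam $1$-dimensional approximation result for $\cO_2$-fibred bundles as a black box.

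First I would reduce to compact $X$. For locally compact Hausdorff $X$, the algebra $C_0(X)$ is locally approximated by the subalgebras $C_0(U)$ with $U$ open and relatively compact, so Proposition~\ref{drPermanence}(i) gives $\dr(\iota) \leq \sup_U \dr(\iota|_{C_0(U)})$. Since $C_0(U)$ is a hereditary subalgebra of $C(\overline{U})$ and $\iota|_{C_0(U)}$ agrees with the restriction of the first factor embedding $\iota_{\overline{U}}: C(\overline{U}) \to C(\overline{U}, \cO_2)$ (whose image on $C_0(U)$ generates the hereditary subalgebra $C_0(U, \cO_2)$), Proposition~\ref{drPermanence}(ii) further bounds each summand by $\dr(\iota_{\overline{U}})$. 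Hence it suffices to handle the compact case.

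For compact $X$, Proposition~\ref{AbelianInclusionDim} reduces $\dr(\iota) \leq 1$ to producing, for any finite open cover $\mathcal{U}$ of $X$ and any $\e > 0$, a $2$-colourable $\e$-approximate finite partition of unity in $C(X, \cO_2)$ subordinate to $\mathcal{U}$ (the general $b$ case is handled by multiplying by $b$, as in that proof). Any such $\mathcal{U}$ pulls back from a finite open cover $\mathcal{V}$ of a compact metric polyhedron $Y$ along a continuous map $\pi: X \to Y$ (take $Y$ to be the nerve of a shrinking of $\mathcal{U}$), and the induced $^*$-homomorphism $\pi^*: C(Y, \cO_2) \to C(X, \cO_2)$ pushes a $2$-colourable approximate partition subordinate to $\mathcal{V}$ to one subordinate to $\mathcal{U}$. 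So it suffices to construct such a partition over a compact metric $Y$.

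This last step is exactly the content of \ccite{Proposition 3.7}{KirchbergRordam:pi3}, which I would invoke as a black box. The hard part --- splitting a partition of unity in $C(Y, \cO_2)$ into two pairwise-orthogonal colour classes by exploiting the connectedness of the unitary group of $C(S^1, \cO_2)$ \cite{Cuntz:KOn} --- lives entirely on their side, so the task here reduces to casting their conclusion in the language of decomposition rank of the first factor embedding; the intervening reductions are routine.
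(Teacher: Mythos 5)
Your argument is correct and rests on the same key ingredient as the paper's, namely \ccite{Proposition 3.7}{KirchbergRordam:pi3}; the outer reductions (to compact, then to compact metrizable $X$) use the same permanence properties, with interchangeable variants --- the paper compactifies at the end via the one-point compactification and Proposition \ref{drPermanence}(ii) rather than at the start, and passes to metrizable spaces via finitely generated subalgebras of $C(X)$ rather than via nerves. Where you genuinely diverge is in the compact metrizable case: the paper does not go through Proposition \ref{AbelianInclusionDim} at all. It uses that \ccite{Proposition 3.7}{KirchbergRordam:pi3} literally provides an intermediate abelian subalgebra $C(X) \tens 1_{\cO_2} \subset A \subset C(X,\cO_2)$ with $A \iso C(\Gamma)$ and $\dim \Gamma \leq 1$, so the first factor embedding factors through $C(\Gamma)$ and one gets $\dr(\iota) \leq \dr\big(C(X)\tens 1_{\cO_2} \subset A\big) \leq \dr(C(\Gamma)) \leq 1$ in one line. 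By contrast, your formulation asks the Kirchberg--R{\o}rdam result to hand you a $2$-colourable approximate partition of unity subordinate to a prescribed cover; that is not quite what their Proposition 3.7 states, so you are suppressing a small (routine, but nonzero) translation step: push a partition of unity subordinate to $\mathcal{V}$ into $C(\Gamma)$, take a $2$-colourable refinement of the induced open cover of the one-dimensional space $\Gamma$, and verify that the resulting supports, in the $C(Y)$-algebra sense, remain subordinate to $\mathcal{V}$. The direct factorization buys you exactly this: it avoids the support bookkeeping and makes both the appeal to Proposition \ref{AbelianInclusionDim} and the nerve construction unnecessary, whereas your version has the mild advantage of producing the partition-of-unity data in the form in which it is actually consumed later (in the proof of Theorem \ref{AHZdr}).
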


\begin{proof}
Let us begin with the case that $X$ is compact and metrizable.
By \ccite{Proposition 3.7}{KirchbergRordam:pi3}, there exists a $^*$-subalgebra $A \subset C(X,\cO_2)$ which contains $C(X) \tens 1_{\cO_2}$ and is isomorphic to $C(Y)$ where $Y$ is compact metrizable with covering dimension at most one.
Therefore, the decomposition rank of the first factor embedding $C(X) \to C(X) \tens \cO_2$ is at most the decomposition rank of the inclusion $C(X) \tens 1_{\cO_2} \subset A$, which in turn is at most $\dr A \leq 1$.

For $X$ compact but not metrizable, $C(X)$ is locally approximated by finitely generated unital subalgebras, which are of the form $C(Y)$ where $Y$ is compact and metrizable.
Therefore by Proposition \ref{drPermanence} (i), the claim holds in this case too.

For the case that $X$ is not compact, we let $\tilde X$ denote the one-point compactification of $X$.
Then $C_0(X,\cO_2)$ is the hereditary subalgebra of $C(\tilde X, \cO_2)$ generated by $C_0(X)$, and therefore the result follows from Proposition \ref{drPermanence} (ii).
\end{proof}

\begin{remark}
The preceding result also implies that $\dn(A \tens \cO_2) \leq 3$ for $A$ as in Theorem \ref{AHZdr} -- this can be seen using Proposition \ref{SSA-TensorFormula}, \ccite{Theorem 7.4}{WinterZacharias:NucDim} and the analogue of \ccite{Proposition 2.3 (ii)}{WinterZacharias:NucDim}.
\end{remark}

In what follows, $D_n$ denotes the diagonal subalgebra of $M_n$.

\begin{lemma}
\label{TracialSplit}
Let $I_1,\dots,I_n \subset (0,1)$ be nonempty closed intervals and let $a_{1/2} \in C_0((0,1),D_n)_+$ be a function of norm $1$ such that for $t \in I_s$, the $s^{\text{th}}$ diagonal entry of $a_{1/2}(t)$ is $1$.

Then there exist $a_0,a_1,e_0,e_{1/2},e_1 \in C([0,1],D_n)_+$ such that
\begin{enumerate}
\item $e_0$ and $e_1$ are orthogonal,
\item $a_0+a_{1/2}+a_1 = e_0+e_{1/2}+e_1 = 1$,
\item for $i=0,1$, we have $a_i(i) = 1_n$,
\item $e_0,e_1$ act like a unit on $a_0,a_1$ respectively,
\item $a_{1/2}$ acts like a unit on $e_{1/2}$.
\end{enumerate}
\end{lemma}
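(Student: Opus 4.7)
The plan is to exploit the fact that $D_n \cong \mathbb{C}^n$ is commutative, so the problem decomposes diagonally into $n$ independent scalar problems. Writing $a_{1/2} = \mathrm{diag}(f_1,\ldots,f_n)$ with each $f_s \in C_0((0,1))_+$ a contraction satisfying $f_s \equiv 1$ on $I_s = [\alpha_s,\beta_s]$, I would construct scalar functions $a_i^{(s)}, e_j^{(s)} \in C([0,1])_+$ satisfying the obvious analogues of (i)--(v) for each $s$, and then reassemble them diagonally.

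For each $s$, let $m_s = (\alpha_s+\beta_s)/2 \in I_s$ be the midpoint. The idea is to use $m_s$ as a seam at which $f_s$ attains the value $1$, so that glueing is continuous for free. Define $a_0^{(s)}(t) := 1 - f_s(t)$ on $[0,m_s]$ and $0$ on $[m_s,1]$; define $a_1^{(s)}$ symmetrically. Continuity at $m_s$ holds because $f_s(m_s) = 1$, and the identity $a_0^{(s)} + f_s + a_1^{(s)} = 1$ holds on each of the two subintervals by construction, giving (ii). Property (iii) reduces to $f_s(0)=f_s(1)=0$, which follows from $f_s \in C_0((0,1))$. Moreover, since $f_s \equiv 1$ on $[\alpha_s, \beta_s]$, one actually has $\mathrm{supp}\, a_0^{(s)} \subset [0,\alpha_s]$ and $\mathrm{supp}\, a_1^{(s)} \subset [\beta_s,1]$, which will make property (iv) automatic.

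Next, define $e_0^{(s)}$ to be $1$ on $[0,\alpha_s]$, decreasing linearly to $0$ on $[\alpha_s,m_s]$, and $0$ on $[m_s,1]$; define $e_1^{(s)}$ by the mirror construction (zero on $[0,m_s]$, rising linearly to $1$ on $[m_s,\beta_s]$, then $1$ on $[\beta_s,1]$). The supports are disjoint, giving (i). Set
\[
e_{1/2}^{(s)} \;:=\; 1 - e_0^{(s)} - e_1^{(s)},
\]
which is automatically positive (since $e_0^{(s)}$ and $e_1^{(s)}$ are orthogonal contractions), gives (ii) trivially, and is supported inside $[\alpha_s,\beta_s] \subset I_s$ where $f_s \equiv 1$, yielding (v). Finally, since $e_0^{(s)} \equiv 1$ on $\mathrm{supp}\,a_0^{(s)} \subset [0,\alpha_s]$ and symmetrically for $e_1^{(s)}$, property (iv) holds.

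Assembling $a_i := \mathrm{diag}(a_i^{(1)},\ldots,a_i^{(n)})$ and similarly for the $e_j$'s produces the required elements in $C([0,1], D_n)_+$. There is no real obstacle here: the lemma is an explicit piecewise-linear construction, and the commutativity of $D_n$ together with the choice of the seam $m_s$ inside $I_s$ does all the work.
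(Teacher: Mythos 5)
Your proof is correct and is essentially the paper's own argument: reduce to the scalar case via $D_n \cong \mathbb{C}^n$, set $a_0 = 1-a_{1/2}$ to the left of $I_s$ and $a_1 = 1-a_{1/2}$ to the right (your seam at the midpoint $m_s$ produces literally the same functions as splitting at $I_s$ itself, since $1-f_s$ vanishes on $I_s$), then take $e_0,e_1$ to be orthogonal bumps equal to $1$ on the respective supports and put $e_{1/2}=1-e_0-e_1$. The only caveat --- shared with the paper's proof --- is that your explicit piecewise-linear $e_j^{(s)}$ implicitly need $I_s$ to be nondegenerate, which is harmless in the application.
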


\begin{proof}
Since $D_n \iso \C^{n}$, it suffices to work in one coordinate at a time -- that is to say, to assume that $n=1$.
Then define
\begin{align*}
a_0(x) &:= \begin{cases} 1-a_{1/2}(x), \quad &\text{if $x$ is to the left of $I_1$} \\
0, \quad &\text{otherwise;} \end{cases} \\
a_1(x) &:= \begin{cases} 1-a_{1/2}(x), \quad &\text{if $x$ is to the right of $I_1$} \\
0, \quad &\text{otherwise.} \end{cases}
\end{align*}
Note that since $a_{1/2} \equiv 1$ on $I_1$, these are continuous.
Now, we may find continuous orthogonal functions $e_0,e_1$ such that $e_0$ is $1$ to the left of $I_1$ and $e_1$ is $1$ to the right of $I_1$.
Finally, set $e_{1/2} := 1-(e_0+e_1)$.
Then (i), (ii), (iii) clearly hold by construction.
(iv) holds since each $a_i$ is nonzero only on one side of $I_1$, and the corresponding $e_i$ is identically $1$ on that side.
Likewise, (v) holds since $e_{1/2}$ is nonzero only on $I_1$, where $a_{1/2}$ is identically $1$.
\end{proof}

We mention the following well-known fact explicitly for convenience. Here, $\otimes$ denotes the minimal tensor product.

\begin{prop}\label{AsymptoticTensor}
Let $A_1,A_2,B_1,B_2$ be  $\mathrm{C}^*$-algebras, and suppose that $\phi^{(i)}:A_i \to (B_i)_\infty$ is a $^*$-homomorphism for $i=1,2$ with a c.p.\ lift $(\phi^{(i)}_k)_{\mathbb{N}}:A_i \to \prod_{\mathbb{N}} B_i$.

Then
\[ \phi_1 \tens \phi_2 = \pi_\infty \circ (\phi^{(1)}_k \tens \phi^{(2)}_k)_{\mathbb{N}}: A_1 \tens A_2 \to (B_1 \tens B_2)_\infty \]
is a $^*$-homomorphism.
\end{prop}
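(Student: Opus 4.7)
The plan is to establish that $\Phi := \pi_\infty \circ (\phi^{(1)}_k \otimes \phi^{(2)}_k)_{\mathbb{N}}$ is well-defined as a c.p.\ map from $A_1 \otimes A_2$ into $(B_1 \otimes B_2)_\infty$, and then to verify multiplicativity and $^*$-preservation on elementary tensors; the full statement will follow by bilinearity and continuity.

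For well-definedness, I would invoke functoriality of the minimal tensor product for c.p.\ maps: for each fixed $k$, the map $\phi^{(1)}_k \otimes \phi^{(2)}_k : A_1 \otimes A_2 \to B_1 \otimes B_2$ is c.p.\ with norm $\|\phi^{(1)}_k\| \cdot \|\phi^{(2)}_k\|$. Because the individual lifts $(\phi^{(i)}_k)_{\mathbb{N}}$ already assemble to c.p.\ maps into $\prod_{\mathbb{N}} B_i$, the sequences $(\|\phi^{(i)}_k\|)_k$ are uniformly bounded. Hence $(\phi^{(1)}_k \otimes \phi^{(2)}_k)_k$ assembles to a c.p.\ map $A_1 \otimes A_2 \to \prod_{\mathbb{N}}(B_1 \otimes B_2)$, and composition with $\pi_\infty$ yields the desired c.p.\ map $\Phi$.

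For multiplicativity on a product of elementary tensors $a_1 \otimes a_2$ and $b_1 \otimes b_2$, I would rewrite the difference
\[
\phi^{(1)}_k(a_1 b_1) \otimes \phi^{(2)}_k(a_2 b_2) - \phi^{(1)}_k(a_1)\phi^{(1)}_k(b_1) \otimes \phi^{(2)}_k(a_2)\phi^{(2)}_k(b_2)
\]
as a telescoping sum of two terms, each containing one factor of the form $\phi^{(i)}_k(xy) - \phi^{(i)}_k(x)\phi^{(i)}_k(y)$ together with a uniformly bounded partner. Since $\phi^{(i)}$ is a $^*$-homomorphism, those offending factors vanish in norm as $k \to \infty$; thus the whole difference lies in $\bigdsum_{\mathbb{N}}(B_1 \otimes B_2)$ and dies under $\pi_\infty$. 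The corresponding identity for $^*$ is immediate from self-adjointness of $\phi^{(i)}$ modulo $\bigdsum_{\mathbb{N}} B_i$.

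Multiplicativity and $^*$-preservation of $\Phi$ therefore hold on the algebraic tensor product $A_1 \odot A_2$, and extend to all of $A_1 \otimes A_2$ by continuity of the bounded map $\Phi$. No substantive obstacle is expected; the only step requiring genuine care is the verification that $\phi^{(1)}_k \otimes \phi^{(2)}_k$ is well-defined and uniformly bounded on the minimal tensor product, which is precisely the functoriality of $\otimes_{\min}$ for c.p.\ maps and the reason the hypothesis that $\otimes$ denotes the minimal tensor product appears in the statement.
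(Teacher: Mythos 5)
Your proof is correct; the paper states this proposition as a well-known fact and omits the argument entirely, and what you give is precisely the standard verification that would fill that gap: uniform boundedness of the c.p.\ lifts plus functoriality of $\otimes_{\min}$ for c.p.\ maps yields a well-defined c.p.\ map into $\prod_{\mathbb{N}}(B_1\otimes B_2)$, and the telescoping estimate on elementary tensors shows multiplicativity and $^*$-preservation modulo $\bigoplus_{\mathbb{N}}(B_1\otimes B_2)$, which then extend from $A_1\odot A_2$ by continuity. The same telescoping argument also shows the resulting $^*$-homomorphism is independent of the chosen lifts, which justifies the notation $\phi_1\otimes\phi_2$, though the statement does not strictly demand this.
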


\begin{lemma}
\label{IntervalUnit}
Let $A$ be an infinite dimensional UHF algebra.

Then there exist positive orthogonal contractions 
\[ a_0,a_1 \in C([0,1],A)_\infty, \]
a $^*$-homomorphism
\[ \psi:C_0(Z, \cO_2) \to C_0((0,1),A)_\infty \]
for some locally compact, metrizable, finite dimensional space $Z$, and a positive element $c \in C_{\mathrm{c}}(Z, \C\cdot1_{\cO_2})$ such that $\psi(c)$ commutes with $a_0,a_1$,
\begin{equation}
\label{IntervalUnit-POU}
a_0 + a_1 + \psi(c) = 1,
\end{equation}
and for $i=0,1$, we have $a_i(i) = 1$.
In addition, there exist positive contractions $e_0,e_{1/2},e_1 \in C([0,1],A)_\infty$ such that
\begin{enumerate}
\item $e_0,e_1$ are orthogonal,
\item $e_0+e_{1/2}+e_1 = 1$,
\item $\psi(c)$ acts like a unit on $e_{1/2}$,
\item $e_i$ acts like a unit on $a_i$ for $i=0,1$,
\item $e_0,e_{1/2},e_1,a_0,a_1,\psi(c)$ all commute.
\end{enumerate}
\end{lemma}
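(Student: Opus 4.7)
The plan is to use the Voiculescu--Kirchberg quasidiagonality of the cone $C_0((0,1], \cO_2)$ together with the UHF structure of $A$ to construct $\psi(c)$ as an ``interior'' partition-of-unity piece on $[0,1]$, playing the role of $a_{1/2}$ in Lemma~\ref{TracialSplit}. The elements $a_0, a_1, e_0, e_{1/2}, e_1$ will then be assembled using that lemma applied to a scalar-diagonal skeleton, together with the abundance of continuous paths of commuting projections in the sequence algebra $A_\infty$.

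First, I would fix a tensor factorization $A \iso M_n \tens A'$ with $n$ large and $A'$ infinite-dimensional UHF. Choose disjoint open subintervals $J_s \subset (0,1)$ ($s = 1, \ldots, n$) with closed middle subintervals $I_s \subset J_s$, and bump functions $\rho_s \in C_c(J_s)_+$ equal to $1$ on $I_s$ and bounded by $1$. Let $Z$ be the disjoint union of $n$ copies of $(0,1]$, a $1$-dimensional locally compact metrizable space. Quasidiagonality of $C_0((0,1], \cO_2)$ provides approximately multiplicative c.p.c.\ maps into matrix subalgebras of $A'$ of arbitrarily large rank, and passage to the sequence algebra yields a c.p.c.\ order zero map $\bar\sigma : \cO_2 \to A'_\infty$ with $h := \bar\sigma(1_{\cO_2})$ a positive contraction of full UHF trace. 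For each $s$, the formula $\bar\psi_s(x)(t) := \rho_s(t)(e_s \tens \bar\sigma(x))$---with $e_s$ the $s$th minimal projection of $D_n \subset M_n$---defines a c.p.c.\ order zero map $\cO_2 \to C_0(J_s, A)_\infty$, which integrates by the Winter--Zacharias correspondence to a $^*$-homomorphism $\psi_s$ out of the $s$th summand of $C_0(Z, \cO_2)$. Since the corners $e_s \tens A'$ are mutually orthogonal, the $\psi_s$ assemble into $\psi : C_0(Z, \cO_2) \to C_0((0,1), A)_\infty$. Taking $c \in C_c(Z, \C \cdot 1_{\cO_2})$ whose $s$th component is the identity function on $[0, \|h\|]$ (with a compactly supported cutoff), one obtains $\psi(c)(t) = \alpha(t) \tens h$, where $\alpha(t) := \sum_s \rho_s(t) e_s \in D_n$.

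To produce $a_0, a_1, e_0, e_{1/2}, e_1$, I would apply Lemma~\ref{TracialSplit} to the scalar-diagonal element $\alpha \in C_0((0,1), D_n)$, obtaining auxiliary elements $a_i^{(n)}, e_j^{(n)} \in C([0,1], D_n)$. These are then promoted to $A_\infty$-valued contractions compatible with $\psi(c) = \alpha \tens h$ by absorbing the discrepancy $\alpha \tens (1_{A'} - h)$---a positive element of trace zero---using orthogonal continuous paths of projections $q_0(t), q_1(t) \in A'_\infty$ that commute with $h$, with $q_0(0) = 1$, $q_1(1) = 1$ and $q_0 + q_1 = 1$; setting $a_i := a_i^{(n)} \tens h + q_i \cdot (\alpha \tens (1 - h))$ gives $a_0 + a_1 + \psi(c) = 1$ and $a_0 \perp a_1$ exactly, with the correct endpoint values. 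The $e_j$'s are constructed analogously, with $e_{1/2}$ supported where $\alpha \equiv 1$ so that $\psi(c) \cdot e_{1/2} = e_{1/2}$. The main technical obstacle is arranging for $h$ to admit continuous paths of commuting projections of varying trace in $A'_\infty$, and for all the commutation and absorption conditions (iii)--(v) to hold \emph{exactly} in the sequence algebra; this depends on choosing $h$ arbitrarily close to a projection of trace $1$ (so the error $1 - h$ sits in a trace-null corner of $A_\infty$) and on the richness of the UHF structure passing to $A_\infty$.
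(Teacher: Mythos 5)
Your overall skeleton (quasidiagonality of the cone over $\cO_2$, a scalar-diagonal model fed into Lemma \ref{TracialSplit}, passage to the sequence algebra) matches the paper's, but the execution breaks down at the decisive step, in two ways. First, the element $h=\bar\sigma(1_{\cO_2})$ can never be ``of full UHF trace,'' nor close to a projection of trace $1$: if $\tau$ is any limit trace on $A'_\infty$ and $\bar\sigma:\cO_2\to A'_\infty$ is c.p.c.\ order zero, then $\tau\circ\bar\sigma$ is a bounded positive tracial functional on $\cO_2$ of norm $\tau(h)$ (use the structure theorem $\bar\sigma(\cdot)=h\,\pi(\cdot)$ with $h$ commuting with $\pi(\cO_2)$ and the trace property), and $\cO_2$ admits no nonzero such functional; hence $\tau(h)=0$, i.e.\ $\tau(\phi_k(d))\to 0$. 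So $1-h$ has \emph{full} trace rather than trace zero --- the image of the cone over $\cO_2$ is necessarily tracially null, which is in fact the conceptual point of the whole construction: the $\cO_2$-coloured piece is the tracially small part, and $a_0,a_1$ must carry the trace. Second, the proposed absorption device cannot exist: a norm-continuous path of projections $q_0(t)$ in a unital $\mathrm{C}^*$-algebra with $q_0(0)=1$ and $q_0(1)=0$ is impossible, since projections at distance $<1$ are unitarily equivalent. (As a symptom, your formula $a_i=a_i^{(n)}\tens h+q_i\,(\alpha\tens(1-h))$ sums with $\psi(c)=\alpha\tens h$ to $1\tens h+\alpha\tens(1-h)\neq 1$.)

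The paper's proof arranges that there is nothing to absorb. It diagonalizes $\phi_k(d)$ and conjugates by unitaries $u_s$ so that over the interval $I_s$ the $s$th diagonal entry of $\psi_k(c)$ is the top eigenvalue $\lambda_k\to 1$; after a perturbation of norm $|1-\lambda_k|$, which vanishes in the sequence algebra, the resulting element $c_k'$ is a $D_l$-valued function equal to $1$ in coordinate $s$ on $I_s$. All of $c_k',a_{i,k},e_{j,k}$ then live in the commutative algebra $C([0,1],D_l)$, where Lemma \ref{TracialSplit} produces the exact partition of unity coordinatewise: in coordinate $s$ the complement $1-c_k'$ vanishes on $I_s$ and therefore splits into disjointly supported left and right pieces, giving $a_0\perp a_1$ for free, and all of (i)--(v) hold exactly at each finite stage. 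The idea missing from your argument is this eigenvalue-spreading: rather than keeping $h$ whole in one matrix corner and fighting the residue $1-h$, distribute the spectrum of $\phi_k(d)$ along the diagonal so that locally, in the relevant coordinate, $\psi_k(c)$ is (up to an asymptotically vanishing perturbation) exactly $1$.
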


\begin{proof}
Let $A = M_\mathfrak{n}$ where $\mathfrak{n}$ is the supernatural number
\[ n_1n_2\ldots. \]
Since the cone over $\mathcal{O}_{2}$ is quasidiagonal, cf.\  \cite{Voiculescu:Quasidiagonal} and  \ccite{Theorem 5.1}{Kirchberg:nonsemisplit}, there exists a sequence of c.p.c.\ maps
\[ \phi_k:C_0((0,1],\cO_2) \to M_{n_1\ldots n_k} \]
which are approximately multiplicative and approximately isometric, meaning that for all $a,b \in C_0((0,1],\cO_2)$,
\[ \|\phi_k(a)\phi_k(b) - \phi_k(ab)\| \to 0 \]
and
\[ \|\phi_k(a)\| \to \|a\| \]
as $k \to \infty$.
Fix a positive element 
\[
d \in C_{\mathrm{c}}((0,1], \C\cdot1_{\cO_2})
\]
of norm $1$.

For each $k$, let $\lambda_k$ denote the greatest eigenvalue of $\phi_k(d)$.
Note that 
\[
\lambda_k = \|\phi_k(d)\| \to 1
\]
as $k \to \infty$.

Fix $k$ for a moment and let $l = n_1\ldots n_k$.
Let 
\[
I_1,\dots,I_l
\]
be nonempty disjoint closed intervals in $(0,1)$.
Let 
\[
u_1,\dots,u_l \in M_l
\]
be unitaries such that, for each $s$, $u_s \phi_k(d) u_s^*$ is a diagonal matrix whose $s^\text{th}$ diagonal entry is $\lambda_k$.
Let 
\[
h_1,\dots,h_l \in C_0((0,1), [0,1])
\]
be positive normalized functions with disjoint support, such that $h_s|_{I_s} \equiv 1$ for each $s$.
Set $Z := (0,1]^2$ and define
\begin{eqnarray*} 
\lefteqn{\psi_k: C_0(Z,\cO_2)  \cong  C_0((0,1]) \tens C_0((0,1],\cO_2)} \\ 
&& \to   \; C([0,1]) \otimes M_{l} \cong C([0,1],M_l) \subset C([0,1],A) 
\end{eqnarray*}
by
\[ \psi_k(f \tens b) = \sum_{s=1}^l f(h_s) \tens u_s \phi_k(b) u_s^*. \]
Let $f \in C_{\mathrm{c}}((0,1])$ be a function satisfying $f(1)=1$ and set 
\[
c=f \tens d \in C_{\mathrm{c}}(Z, \C\cdot1_{\cO_2}).
\]
By construction,  $\psi_k(c) \in C((0,1), D_l)_+$, and for $t \in I_s$, the $s^\text{th}$ diagonal entry is $\lambda_k$.
Let 
\[
c'_k \in C([0,1],D_l)_+
\]
be of norm $1$, such that 
\[
\|c'_k-\psi_k(c)\| = |1-\lambda_k|
\]
and for $t \in I_s$, the $s^\text{th}$ diagonal entry is $1$.
Feeding 
\[
a_{1/2} := c'_k
\]
to Lemma \ref{TracialSplit}, let 
\[
a_{0,k},a_{1,k},e_{0,k},e_{1/2,k},e_{1,k} \in C([0,1],D_l)_+
\]
be the output, satisfying (i)-(v) of Lemma \ref{TracialSplit}.

Having found these for each $k$, set
\[ \psi := \pi_\infty \circ (\psi_1, \psi_2,\dots): C_0(Z,\cO_2) \to C([0,1],A)_\infty. \]
Set 
\[
a_i := \pi_\infty(a_{i,1},a_{i,2},\dots)
\]
for $i=0,1$ and 
\[
e_i := \pi_\infty(e_{i,1},e_{i,2},\dots)
\]
for $i=0,\frac12,1$.

Since  all unitaries in $M_l$ (and in particular, all $u_s$'s) are in the same path component, $\psi_k$ is unitarily equivalent to $\alpha \tens \phi_k$, where 
\[
\alpha:C_0((0,1]) \to C([0,1])
\]
is the $^*$-homomorphism given by 
\[
f \mapsto f(h_1 + \ldots + h_l).
\]
From this observation and Proposition \ref{AsymptoticTensor}, it follows that $\psi$ is a $^*$-homomorphism.

Notice further that 
\[
\psi(c) = \pi_\infty(c'_1,c'_2,\dots),
\]
and therefore, drawing on the finite stage results, we see that 
\[
a_0+a_1+\psi(c) = 1
\]
and that (i)-(v) hold.
\end{proof}

\begin{lemma}
\label{DimDropUnit}
Let $p,q > 1$ be natural numbers.
Let $X=[0,1]^m$ for some $m$ and let $\e > 0$.

Then there exist positive orthogonal elements
\[ h_0,\dots,h_k \in C(X,\jsZ)_\infty, \]
a $^*$-homomorphism
\[ \phi:C_0(Z, \cO_2) \to C(X,\jsZ)_\infty \]
for some locally compact, metrizable, finite dimensional space $Z$, and a positive element $c \in C_{\mathrm{c}}(Z, \C\cdot1_{\cO_2})$ such that $\phi(c)$ commutes with $h_0,\dots,h_k$,
\[ h_0 + \ldots + h_k + \phi(c) = 1, \]
and the support of $h_i$ has diameter at most $\e$ for $i=0,\dots,k$ with respect to the uniform metric on $[0,1]^{m}$.

In addition, there exist positive contractions $e_0,e_{1/2},e_1 \in C(X,\jsZ)_\infty$ such that
\begin{enumerate}
\item $e_0,e_1$ are orthogonal,
\item $e_0+e_{1/2}+e_1 = 1$,
\item $e_j$ is identically $1$ on $\{j\} \times [0,1]^{m-1}$, for $j=0,1$,
\item $\phi(c)$ acts like a unit on $e_{1/2}$,
\item $e_0+e_1$ acts like a unit on $h_i$ for all $i=0,\dots,k$, 
\item $e_0,e_{1/2},e_1,h_0,\dots,h_k,\phi(c)$ all commute.
\end{enumerate}
\end{lemma}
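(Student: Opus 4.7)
The plan is to build the required partition by combining Lemma \ref{IntervalUnit} on the first coordinate with a fine grid decomposition of $X = [0,1]^m$. The parameters $p, q > 1$ enter as the UHF fibers $U_p, U_q$ (of supernatural types $p^\infty, q^\infty$) of a prime dimension drop algebra $\mathcal{Z}_{p,q}$, which embeds unitally in $\mathcal{Z}$ whenever $\gcd(p,q) = 1$; one can reduce to this case by replacing $p,q$ by coprime multiples if necessary. The entire construction will be carried out inside a commutative diagonal matrix subalgebra of $U_p \otimes U_q \subset \mathcal{Z}_{p,q} \subset \mathcal{Z}$, so that commutativity condition (vi) is automatic.

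For the coarse decomposition along the first coordinate, I would apply Lemma \ref{IntervalUnit} with $A = U_p \otimes U_q$. This produces the elements $e_0, e_{1/2}, e_1 \in C([0,1], A)_\infty$, an orthogonal pair $a_0, a_1$, and a $^*$-homomorphism $\psi_1 : C_0(Z_1, \cO_2) \to C([0,1], A)_\infty$ satisfying (i)--(v) of Lemma \ref{IntervalUnit}. Viewing these as functions on $X$ independent of the last $m-1$ coordinates, they immediately yield property (iii), since $e_j$ is $1$ at the endpoint $t = j$ of the first coordinate. The middle slab, where $e_{1/2} \equiv 1$, is a narrow strip in the first-coordinate direction.

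For the small-support bumps, I would choose a grid decomposition of $X$ into cubes of diameter at most $\e$. For each cube $C_i$ disjoint from the middle slab I build a positive bump $h_i$ supported in (a neighborhood of) $C_i$, placed in a distinct diagonal slot of a matrix subalgebra $M_{k+1} \subset U_p \otimes U_q$, with $k+1$ equal to the number of such cubes. Orthogonality (i) of the $h_i$'s and commutativity of the $h_i$'s with the $e_j$'s and $\phi(c)$ are then automatic from the diagonal matrix structure, and the domination $e_0 + e_1 \geq \sum h_i$ required by (v) follows because $e_0 + e_1 \equiv 1$ outside the middle slab. The cubes that do meet the middle slab are absorbed into $\phi(c)$ via another IntervalUnit-type construction performed in a neighborhood of the middle slab, producing an auxiliary $^*$-homomorphism $\psi_2$ whose image fills the gap of the partition of unity above the middle slab.

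The final $^*$-homomorphism $\phi: C_0(Z, \cO_2) \to C(X, \jsZ)_\infty$ is then assembled from the $\psi_j$'s (one from the coarse step, one from the middle-slab step, and possibly further ones from refinements) by using $\cO_2 \cong \cO_2^{\otimes N}$ together with Proposition \ref{AsymptoticTensor}; set $Z$ equal to the product $Z_1 \times Z_2 \times \cdots$ and $c$ equal to a corresponding tensor product of the elements furnished by Lemma \ref{IntervalUnit} at each stage, so that $\phi(c)$ is the sum of the individual $\psi_j(c_j)$. Properties (ii), (iv) then follow by tracing through the corresponding relations in Lemma \ref{IntervalUnit}. The main obstacle is the third step: arranging the $h_i$'s to simultaneously satisfy small support, mutual orthogonality, dominance by $e_0 + e_1$, and commutativity with $\phi(c)$, all while respecting the partition of unity. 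The delicate bookkeeping lies in synchronizing the \emph{spatial} positions of the bumps in $X$ with their \emph{algebraic} positions in the diagonal slots of $M_{k+1}$, and in ensuring that the patch over the middle slab inserted by the auxiliary IntervalUnit construction meshes cleanly with the coarse $(e_0, e_{1/2}, e_1)$ decomposition produced in the first step.
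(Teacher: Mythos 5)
There is a genuine gap, and it sits at the heart of the lemma. Your plan rests on the claimed inclusions $U_p \otimes U_q \subset \jsZ_{p^\infty,q^\infty} \subset \jsZ$, and on carrying out the entire construction inside a diagonal matrix subalgebra of $U_p \otimes U_q$. But $U_p \otimes U_q$ does not embed unitally into the dimension drop algebra $\jsZ_{p^\infty,q^\infty} = \{f \in C([0,1], U_p \otimes U_q) : f(0) \in U_p \otimes 1,\ f(1) \in 1 \otimes U_q\}$: a constant function with non-scalar value violates the boundary conditions, and more fundamentally $\jsZ_{p^\infty,q^\infty}$ (like $\jsZ$ itself) is projectionless, so it contains no unital copy of any matrix algebra, let alone of a diagonal $D_{k+1}$ or of $U_p \otimes U_q$. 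Consequently none of the elements you build --- the bumps $h_i$ placed in ``distinct diagonal slots of $M_{k+1}$'', the diagonal $e_j$'s, the image of $\phi$ --- actually lie in $C(X,\jsZ)_\infty$. The passage from UHF fibres to $\jsZ$ fibres is precisely the main difficulty of this lemma and cannot be obtained from an inclusion of algebras.

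Your treatment of the UHF-fibred case is otherwise close in spirit to the paper's Steps 1 and 2 (which tile $[0,1]$ with rescaled copies of the Lemma \ref{IntervalUnit} data and then take $m$-fold tensor powers to reach $X=[0,1]^m$; the tensor-power device makes orthogonality and the exact partition-of-unity identity automatic, whereas your grid-of-cubes picture leaves the identity $\sum_i h_i + \phi(c) = 1$ unverified, since placing each bump in its own diagonal slot forces $\phi(c)=1-\sum h_i$ without explaining why that element is $\phi$ of a central $c$). The missing third step is the essential one: the paper runs the UHF construction twice, with fibres $M_{p_0^\infty}$ and $M_{p_1^\infty}$, and interpolates between the two along the \emph{internal} $[0,1]$-coordinate of $\jsZ_{p_0^\infty,p_1^\infty}$ by a further application of Lemma \ref{IntervalUnit} with fibre $M_{(p_0p_1)^\infty}$ over the middle third of that interval; the elements $\rho(a_0 \otimes h^{(0)}_j \otimes 1)$ and $\rho(a_1 \otimes 1 \otimes h^{(1)}_j)$ then satisfy the dimension drop boundary conditions because $a_0,a_1$ are scalar at the endpoints. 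Even then one initially only gets $\phi(c) + \sum h_{i,j} \geq 1$ and must shrink the $h$'s by a diagonal-sequence argument to restore exact equality --- another point your sketch does not address.
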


\begin{proof}
This will be proven in three steps.
In Step 1, we will prove the statement of the proposition with $\jsZ$ replaced by a UHF algebra of infinite type and with $m=1$.
In Step 2, we will still replace $A$ by a UHF algebra of infinite type, but allow any $m \in \mathbb{N}$.
Step 3 will be the proof of the proposition.

\bigskip
\underline{Step 1.}
Let $A$ be a UHF algebra of infinite type.
Let 
\[
a_0,a_1, \psi, c,e'_0,e'_{1/2},e'_1, Z
\]
be as in Lemma \ref{IntervalUnit}, with $e_{i}'$ in place of $e_{i}$.
Note that each $a_i$  has a positive normalized lift 
\[ \textstyle (a_{i,j})_{j=1}^\infty \in \prod_\mathbb{N} C([0,1],A)\]
such that $a_{i,j}(t) = \dl_{i,t} 1$ for all $i,t=0,1$ and all $j$; likewise, each $e_{i}'$, $i=0,\frac12,1$ has a positive normalized lift
\[ \textstyle (e'_{i,j})_{j=1}^\infty \in \prod_\mathbb{N} C([0,1],A)\]
such that, for $i=0,1$, $e_{i,j}'(i) = 1$.

Let $k \geq 2/\e$ be a natural number.
For $i=0,\dots,k$, $j \in \mathbb{N}$, and $t \in [0,1]$, set
\begin{equation}
\label{DimDropUnit-1hDefn}
h_{i,j}(t) := \begin{cases}
0, \quad &\text{if } t \leq \frac{i-1}{k} \text{ or } t \geq \frac{i+1}k, \\
a_{1,j}(k t - (i-1)), \quad &\text{if } t \in \left[\frac{i-1}k, \frac ik\right], \\
a_{0,j}(k t - i), \quad &\text{if } t \in \left[\frac ik, \frac{i+1}k\right].
\end{cases}
\end{equation}
Note that the endpoint conditions on $a_{i,j}$ make $h_{i,j}$ well-defined and continuous on $[0,1]$.
Likewise, set
\begin{equation}
\label{DimDropUnit-1eDefn}
e_{i,j}(t) := \begin{cases}
e_{i,j}'(0), \quad &\text{if } t = 0, \\
e_{i,j}'(1), \quad &\text{if } t \geq \frac1k, \\
e_{i,j}'(kt), \quad &\text{if } t \in \left[0, \frac1k\right].
\end{cases}
\end{equation}
Set 
\[
h_i := \pi_\infty(h_{i,1},h_{i,2},\dots), e_i := \pi_\infty(e_{i,1},e_{i,2},\dots)  \in C([0,1],A)_\infty
\]
for $i=0,1$. Choose a c.p.c.\ lift for $\psi$, i.e., c.p.c.\ maps 
\[
\psi_j:C_0(Z,\cO_2) \to C_0((0,1),A) \subset C([0,1],A)
\]
such that 
\[
\psi := \pi_\infty \circ (\psi_1,\psi_2,\dots).
\]
Define 
\[
\phi_j:C_0(Z,\cO_2) \to C([0,1],A)
\]
by
\begin{equation}
\label{DimDropUnit-1phijDefn}
\phi_j(a)(t) = \psi_j(a)(k t - i),
\end{equation}
if $i \in \mathbb{N}$ is such that $t \in \left[\frac ik, \frac{i+1}k\right]$.
Note that this is well-defined since the image of $\psi_j$ is contained in $C_0((0,1),A)$.
Use $(\phi_j)_{j=1}^\infty$ to define
\begin{equation*}
\phi = \pi_\infty \circ (\phi_1,\phi_2,\dots): C_0(Z,\cO_2) \to C([0,1],A)_\infty.
\end{equation*}
Then $\phi$ is a $^*$-homomorphism.

Let us first show that $h_0 + \ldots + h_k + \phi(c) = 1$, and then that (i)-(vi) hold.
For $t \in [0,1]$, let $i$ be such that $t \in \left[\frac ik, \frac{i+1}k\right]$.
Then by \eqref{DimDropUnit-1hDefn}, we have for all $i$, 
\[
h_i(t) = a_0(kt - i),\; h_{i+1}(t) = a_1(kt-i) \mbox{ and } h_j(t) = 0
\]
for $j\neq i,i+1$.
Thus,
\begin{eqnarray*}
(h_0 + \ldots + h_k + \phi(c))(t) &\stackrel{\eqref{DimDropUnit-1phijDefn}}{=}& a_0(kt - i) + a_1(kt - i) + \psi(c)(kt-i) \\
&\stackrel{\eqref{IntervalUnit-POU}}{=}& 1.
\end{eqnarray*}

Properties (i) and (ii) hold by Lemma \ref{IntervalUnit} (i) and (ii), and since for each $t \in [0,1]$, there exists $s$ such that $e_j(t) = e'_j(s)$ for $j=0,\frac12,1$ (by \eqref{DimDropUnit-1eDefn}).
Property (iii) holds since $e_i(i) = e'_i(i)$ (by \eqref{DimDropUnit-1eDefn}) and since $a_i(i)=1$.

(iv): $e_{1/2}$ is supported on $\left[0,\frac1k\right]$, so it suffices to show that 
\[
(\phi(c)e_{1/2})(t) = e_{1/2}(t)
\]
for $t \in \left[0,\frac1k\right]$. 
But for such $t$,
\begin{eqnarray*}
(\phi(c)e_{1/2})(t) &\stackrel{\eqref{DimDropUnit-1eDefn},\eqref{DimDropUnit-1phijDefn}}{=}& \psi(c)(kt)e'_{1/2}(kt) \\
&\stackrel{\text{Lemma \ref{IntervalUnit} (iii)}}{=}& e'_{1/2}(kt)\\
&\stackrel{\eqref{DimDropUnit-1eDefn}}{=}& e_{1/2}(t).
\end{eqnarray*}

(v): By a similar computation  (this time using Lemma \ref{IntervalUnit} (iv)), we see that $e_0a_0 = a_0$, while $e_1a_i=a_i$ for $i=1,\dots,k$.

(vi) is clear from \eqref{DimDropUnit-1hDefn}, \eqref{DimDropUnit-1eDefn}, \eqref{DimDropUnit-1phijDefn}, and Lemma \ref{IntervalUnit} (v).

Finally, also, for each $i$, the support of $h_{i}$ is contained in $\left[\frac{i-1}k,\frac{i+1}k\right]$, which has diameter at most $\e$.

\bigskip
\underline{Step 2.}
From Step 1, let
\[ g_0,\dots,g_{k'} \in C([0,1],A)_\infty \]
be orthogonal positive contractions,
\[ \psi:C_0(Y, \cO_2) \to C([0,1],A)_\infty \]
be a $^*$-homomorphism for some locally compact, metrizable, finite dimensional space $Y$, and $d \in C_{\mathrm{c}}(Y, \C\cdot1_{\cO_2})$ a positive contraction such that $\psi(d)$ commutes with $g_0,\dots,g_{k'}$,
\[ g_0 + \ldots + g_{k'} + \psi(d) = 1 \]
and the support of $g_i$ has diameter at most $\e$ for $i=0,\dots,k'$; 
furthermore, let 
\[
e'_0,e'_{1/2},e'_1 \in C([0,1],A)_\infty
\]
be such that 
\begin{enumerate}
\item[(i')] $e'_0,e'_1$ are orthogonal,
\item[(ii')] $e'_0+e'_{1/2}+e'_1 = 1$,
\item[(iii')] $e'_j$ is identically $1$ on $\{j\} \times [0,1]^{m-1}$, for $j=0,1$,
\item[(iv')] $\psi(d)$ acts like a unit on $e'_{1/2}$,
\item[(v')] $e'_0+e'_1$ acts like a unit on $g_i$ for all $i=0,\dots,k'$, 
\item[(vi')] $e'_0,e'_{1/2},e'_1,g_0,\dots,g_{k'},\psi(d)$ all commute.
\end{enumerate}

For $i=(i_1,\dots,i_m) \in \{0,\dots,k'\}^m$, set
\[ h_i := g_{i_1} \tens \ldots \tens g_{i_m} \in (C([0,1],A)^{\tens m})_\infty, \]
where we have used the canonical inclusion 
\[
(C([0,1],A)_{\infty})^{\tens m} \to  (C([0,1],A)^{\tens m})_\infty,
\]
cf.\ Proposition \ref{AsymptoticTensor}.

Then $\{h_i\}$ is a set of pairwise orthogonal positive contractions, and each one has support with diameter at most $\e$ (recall that we are using the uniform metric on $[0,1]^m$).
Proposition \ref{AsymptoticTensor} gives us a $^*$-homomorphism 
\[ \phi' := (\psi ^\sim)^{\tens m}: C := (C_0(Y,\cO_2)^\sim)^{\tens m} \to \left(C([0,1],M_{n^\infty})^{\tens m}\right)_\infty. \]
Set
\[ c := 1-(1-d)^{\tens m} \in C. \]
We can easily see that $\phi'(c)$ commutes with each $h_i$; a simple computation shows that 
\[ \sum_i h_i + \phi'(c) = 1. \]
Setting
\[ \textstyle e_i := e'_i \tens 1^{\tens (m-1)} \quad \text{for }i=0,\frac12,1, \]
it is easy to see that (i),(ii),(iii),(v), and (vi) hold (with $\phi'$ in place of $\phi$).
To see that (iv) holds, we compute
\begin{eqnarray*}
\displaystyle \phi'(c)e_{1/2} &=& (1-(1-\psi(d))^{\tens m})(e'_{1/2} \tens 1^{\tens (m-1)}) \\
\displaystyle &=& \phi'(c) - (e'_{1/2} - \psi(d)e'_{1/2}) \tens (1-\psi(d))^{\tens (m-1)} \\
\displaystyle &\stackrel{\text{(iv')}}{=}& \phi'(c)
\end{eqnarray*}

 We may set 
\[
k := (k'+1)^{m} - 1 
\]
and relabel the $h_{i}$ as $h_{0},\ldots h_{k}$. 

All that remains is to modify $\phi'$ to make it a map whose domain is $C_0(Z,\cO_2)$ for some $Z$.
Set 
\[
Z' := (Y \dunion \{\infty\})^{\times m}.
\]
Then $C$ may be identified with a certain $C(Z')$-subalgebra of $C(Z', \cO_2^{\tens m})$.
All of the fibres of $C$ are isomorphic to $\cO_2$ except for the fibre at $(\infty,\dots,\infty)$, which is $\mathbb{C}$.
One can easily verify that the element $c$ is in $C_0(U,\C\cdot1_{\cO_2^{\tens m}})$ where $U$ is some open subset of $Z'$ whose closure does not contain $(\infty,\dots,\infty)$.
Let $Z$ be an open subset of $Z'$ such that $\overline{U} \subset Z$ and whose closure does not contain $(\infty,\dots,\infty)$; in particular, $\overline{Z}$ is a compact subset of $Z' \setminus \{(\infty,\dots,\infty)\}$.
By Proposition \ref{O2CtsField}, $C_0(Z)C \iso C_0(Z,\cO_2)$ as $C_0(Z)$-algebras.
With this identification, we have $c \in C_{\mathrm{c}}(Z,\C\cdot1_{\cO_2})$ (since $c$ is in the image of the structure map, which is fixed by the isomorphism $C_0(Z)C \iso C_0(Z,\cO_2)$), and we may define
\[ \phi := \phi'|_{C_0(Z)C}:C_0(Z,\cO_2) \to C(X,A)_\infty. \]

\bigskip
\underline{Step 3.}
Let $p_0,p_1$ be coprime natural numbers.
Since $\jsZ_{p_0^\infty,p_1^\infty}$ (as defined in \ccite{Section 2}{RordamWinter:Z}) embeds unitally into $\jsZ$ (\ccite{Proposition 2.2}{Rordam:Z}), it suffices to do this part with $\jsZ_{p_0^\infty,p_1^\infty}$ in place of $\jsZ$.

From Step 2, for $i=0,1$, we may find 
\[ h^{(i)}_0,\dots,h^{(i)}_k \in C(X,M_{p_i^\infty})_\infty, \]
a $^*$-homomorphism
\[ \phi_i:C_0(Z_i, \cO_2) \to C(X,M_{p_i^\infty})_\infty \]
for some locally compact, metrizable, finite dimensional space $Z_i$, and a positive element 
\[
c_i \in C_{\mathrm{c}}(Z_i, \C\cdot1_{\cO_2})
\]
such that $\phi_{i}©$ commutes with $h^{(i)}_0,\dots,h^{(i)}_k$,
\[ h^{(i)}_0 + \ldots + h^{(i)}_k + \phi_i(c_i) = 1, \]
and the support of $h^{(i)}_j$ has diameter at most $\e$ for $j=1,\dots,k$.
We may also find $e^{(i)}_l$ for $l=0,\frac12, 1$ satisfying (i)-(vi).

From Lemma \ref{IntervalUnit}, let 
\[ \textstyle a_0,a_1,e'_0,e'_{1/2},e'_1 \in C\left(\left[\frac13, \frac23\right],A\right)_\infty \]
be positive orthogonal contractions,
\[ \textstyle \psi:C_0(Y, \cO_2) \to C_0\left(\left(\frac13,\frac23\right),M_{(p_0p_1)^\infty}\right)_\infty \]
be a $^*$-homomorphism, for some locally compact, metrizable, finite dimensional space $Y$, and let 
\[
d \in C_{\mathrm{c}}(Z, \C\cdot1_{\cO_2})
\]
be positive such that $\psi(d)$ commutes with $a_0,a_1$,
\[ a_0 + a_1 + \psi(d) = 1, \]
$a_0\left(\frac13\right)=a_1\left(\frac23\right)=1$, and such that (i)-(v) of Lemma \ref{IntervalUnit} hold.
We continuously extend $a_0,a_1,e'_0,e'_{1/2},e'_1$ to $[0,1]$ by allowing them to be constant on $\left[0,\frac13\right]$ and on $\left[\frac23,1\right]$.

Upon choosing an isomorphism
\[
M_{(p_{0}p_{1})^{\infty}} \otimes M_{p_{0}^{\infty}} \otimes M_{p_{1}^{\infty}} \cong M_{p_{0}^{\infty}} \otimes M_{p_{1}^{\infty}}
\] 
we obtain a $^{*}$-homomorphism
\begin{eqnarray*}
\lefteqn{\rho:  C([0,1],M_{(p_0p_1)^\infty})_\infty \tens C(X,M_{p_0^\infty})_\infty \tens C(X,M_{p_1^\infty})_\infty }\\
&&\to \;C([0,1] \times X, M_{(p_0 p_1)^\infty} \tens M_{p_0^\infty} \tens M_{p_1^\infty})_\infty \\
&&\iso \;C([0,1] \times X, M_{p_0^\infty} \tens M_{p_1^\infty})_\infty,
\end{eqnarray*}
and define
\begin{align*}
\hat{h}_{0,j} &:= \rho(a_0 \tens h^{(0)}_j \tens 1_{C(X,M_{p_1^\infty})_\infty}) \quad \text{and} \\
\hat{h}_{1,j} &:= \rho(a_1 \tens 1_{C(X,M_{p_0^\infty})_\infty} \tens h^{(1)}_j)
\end{align*}
for $j=1,\dots,k$.
Note that $a_i$ has a lift 
\[
\textstyle
(a_{i,k})_{k=1}^\infty \in \prod_\mathbb{N} C([0,1],M_{(p_0p_1)^\infty})
\]
such that $a_{i,k}(t) \in \C \cdot 1$ for $t=0,1$, and consequently,
\[ \hat{h}_{i,j} \in C(X,\jsZ_{p_0^\infty,p_1^\infty})_\infty. \]
Define a $^{*}$-homomorphism
\begin{eqnarray*}
\lefteqn{\phi :=  \rho \circ (\id_{C([0,1])} \cdot (\psi^\sim) \tens (\phi_0^\sim) \tens (\phi_1^\sim)): }\\
&&C([0,1]) \tens C_0(Y,\cO_2)^\sim \tens C_0(Z_0,\cO_2)^\sim \tens C_0(Z_1,\cO_2)^\sim \\
& & \to  C([0,1] \times X, M_{p_0^\infty} \tens M_{p_1^\infty})_\infty .
\end{eqnarray*}
Let 
\[
Y' := \{y \in Y: d(y) > 0\}
\]
and 
\[
Z_i' := \{z \in Z_i: c_i(z) \neq 0\},
\]
and using these, set
\begin{align*}
C :=  \mathrm{C}^{*}\Big(&C_0\left[0,1\right) \tens 1_{C_0(Y,\cO_2)^\sim} \tens C_0(Z_0', \cO_2) \tens 1_{C_0(Z_1,\cO_2)^\sim}, \\
&C_0\left(0,1\right] \tens 1_{C_0(Y,\cO_2)^\sim} \tens 1_{C_0(Z_0,\cO_2)^\sim} \tens C_0(Z_1',\cO_2) , \\
&1_{C([0,1])} \tens C_0(Y',\cO_2) \tens 1_{C_0(Z_0,\cO_2)^\sim} \tens 1_{C_0(Z_1,\cO_2)^\sim}\Big). 
\end{align*}
Using Proposition \ref{O2CtsField} as in Step 2, $C$ is a subalgebra of some $C_0(Z)$-algebra
\[ D \subset C[0,1] \tens C_0(Y,\cO_2)^\sim \tens C_0(Z_0, \cO_2) \tens C_0(Z_1, \cO_2), \]
for some open subset $Z$ of
\[ [0,1] \times (Y' \cup \{\infty\}) \times (Z'_0 \cup \infty) \times (Z'_1 \cup \infty), \]
and $D$ is isomorphic, as a $C_0(Z)$-algebra, to $C_0(Z,\cO_2)$, via an isomorphism taking $C$ into $C_c(Z,\cO_2)$.
One easily sees that $\phi(C) \subset C(X,\jsZ_{p_0,p_1})$.

Let $f_0 \in C_0[0,1)_+$ be identically $1$ on $\left[0,\frac23\right]$, and let $f_1 \in C_0(0,1]_+$ be identically $1$ on $\left[\frac13,1\right]$.
Set
\[ \hat c := f_0 \tens 1 \tens c_0 \tens 1 + f_1 \tens 1 \tens 1 \tens c_1 + 1 \tens d \tens 1 \tens 1 \in C. \]
Identifying $D$ with $C_0(Z,\cO_2)$, we see that $\hat c \in C_c(Z,\C\cdot1_{\cO_2})$.
It is straightforward to check  that $\phi(\hat c)$ commutes with $\hat{h}_{i,j}$ for all $i,j$, and we may easily compute
\[ \phi(\hat c) + \sum_{i,j} \hat{h}_{i,j} \geq 1. \]
Let $g \in C_0(0,\infty]$ be the function $g(t)=\max\{t,1\}$ and set 
\[ c:= g(\hat c). \]
Then by commutativity, it follows that
\begin{equation}
\label{DimDropUnit-3SuperUnit}
 \phi(c) + \sum_{i,j} \hat{h}_{i,j} \geq 1.
\end{equation}

Let $g_0,g_{1/2},g_1 \in C(X)_+$ be a partition of unity such that $g_j$ is identically $1$ on $\{j\} \times [0,1]^{m-1}$ for $j=0,1$, $g_0$ is
supported on $\left[0,\frac13\right] \times [0,1]^{m-1}$, and $g_1$ is supported on $\left[\frac23,1\right] \times
[0,1]^{m-1}$. 
Let us define
\begin{equation}
\label{DimDropUnit-3eDefn}
 e_j := \rho(e_0' \tens e^{(0)}_j \tens 1 + e_1' \tens 1 \tens e^{(1)}_j) + g_j\rho(e_{1/2}' \tens 1 \tens 1)
\end{equation}
for $j=0,\frac12,1$.
It is clear by their definitions that $e_0,e_{1/2},e_1,\hat h_0,\dots,\hat h_k,\phi(c)$ all commute.

Let us now check that $(e_0+e_1)\hat{h}_{i,j} = \hat{h}_{i,j}$.
Certainly,
\begin{eqnarray*}
\displaystyle (e_0+e_1)\hat{h}_{0,j} &\stackrel{\eqref{DimDropUnit-3eDefn}}{=}& \Big( \rho\Big(e_0' \tens (e_0^{(0)} + e_1^{(0)}) \tens 1
+ e_1' \tens 1 \tens (e_0^{(1)} \tens e_1^{(1)})\Big) \\
\displaystyle &&+ (g_0+g_1)\rho(e_{1/2}' \tens 1 \tens 1) \Big) \rho(a_0 \tens h_j^{(0)} \tens 1) \\
\displaystyle &\stackrel{\begin{tabular}{@{}c} {\scriptsize \text{Lemma \ref{IntervalUnit}}} \\ {\scriptsize \text{(ii),(iv)}} \end{tabular}}{=}& \rho\left(a_0 \tens \left((e_0^{(0)} + e_1^{(0)})h_j^{(0)}\right) \tens 1\right) \\
\displaystyle &\stackrel{\text{Step 2 (v)}}{=}& \rho(a_0 \tens h_j^{(0)} \tens 1) = \hat{h}_{0,j},
\end{eqnarray*}
and likewise, $(e_0+e_1)\hat{h}_{1,j} = \hat{h}_{1,j}$ as required.

Since all terms in \eqref{DimDropUnit-3SuperUnit} commute, it is easy to see that for any $\e > 0$, there exist orthogonal elements $\tilde{h}_{i,j} \leq \hat{h}_{i,j}$ which commute with $e_0,e_{1/2},e_1$ and $\phi(c)$, such that
\[ \phi(c) + \sum_{i,j} \tilde{h}_{i,j} =_{\e} 1. \]
Then, by a diagonal sequence argument, it follows that there exist orthogonal elements $h_{i,j}$ with supports contained in those of $\hat{h}_{i,j}$ which commute with $e_0,e_{1/2},e_1$ and $\phi(c)$, such that
\[ \phi(c) + \sum_{i,j} h_{i,j} = 1, \]
and
\[ (e_0+e_1)h_{i,j} = h_{i,j}. \]
Hence, (v) holds.

Now let us verify (i)-(iv).

(i) holds using the following orthogonalities:
\begin{align*}
 e_0^{(i)} &\perp e_1^{(i)}, \quad \text{i=0,1} \\
g_0 &\perp g_1 \\
e_0' &\perp e_1' \\
\rho(1 \tens e_j^{(0)} \tens 1) &\perp g_{1-j}, \quad \text{j=0,1}, \\
\rho(1 \tens 1 \tens e_j^{(1)}) &\perp g_{1-j}, \quad \text{j=0,1}.
\end{align*}

(ii): We compute
\begin{eqnarray*}
\displaystyle \e_0 + e_{1/2} + e_1 &\stackrel{\eqref{DimDropUnit-3eDefn}}{=}& \rho\Big(e_0' \tens \left(e_0^{(0)} + e_{1/2}^{(0)} + e_1^{(0)}\right) \tens 1 \\
&& \quad + e_1' \tens 1 \tens \left(e_0^{(1)} + e_{1/2}^{(1)} + e_1^{(1)} \right)\Big) \\ 
\displaystyle && \quad +(g_0 + g_{1/2} + g_1)\rho(e_{1/2}' \tens 1 \tens 1) \\
\displaystyle &\stackrel{\text{Step 2 (ii)}}{=}& \rho\left(\left(e_0' + e'_1 + e'_{1/2}\right) \tens 1 \tens 1\right) \\
\displaystyle &\stackrel{\text{Lemma \ref{IntervalUnit}} \text{(ii)} }{=}& 1
\end{eqnarray*}

(iii): For $x \in \{j\} \times [0,1]^{m-1}$,
\begin{eqnarray*}
\displaystyle e_j(x) &\stackrel{\text{Step 2 (iii)}}{=}& e_0' + e_1' + g_j(x)e_{1/2}' \\
\displaystyle &\stackrel{\text{Lemma \ref{IntervalUnit} (ii)}}{=}& 1.
\end{eqnarray*}

(iv) follows from the fact that $\phi(\hat{c})e_{1/2} = e_{1/2}\phi(\hat{c}) \geq e_{1/2}$, by considering irreducible representations of $\mathrm{C}^*(\phi(\hat{c}),e_{1/2})$.
\end{proof}

\begin{proof}[Proof of Theorem \ref{AHZdr}]
By Proposition \ref{drPermanence} (i) and \ccite{Proposition 3.8}{KirchbergWinter:CovDim}, it suffices to verify the theorem for $\mathrm{C}^*$-algebras $A$ of the form $C(X,\K)$ where $X$ is compact and Hausdorff.
By \ccite{(3.5)}{KirchbergWinter:CovDim}, it suffices to prove it for $A = C(X)$.
Again by Proposition \ref{drPermanence} (i), it suffices to assume that $C(X)$ is finitely generated.
Finally, when $C(X)$ is finitely generated, it is a quotient of $C([0,1]^m)$ for some $m$, and so by \ccite{(3.3)}{KirchbergWinter:CovDim}, the result reduces to showing that $\dr C(X,\jsZ) \leq 2$ for $X=[0,1]^m$.
By Proposition \ref{SSA-TensorFormula}, we must show that the first factor embedding $C(X,\jsZ) \to C(X,\jsZ) \tens \jsZ$ has decomposition rank at most 2.

We will do this in two steps.
In Step 1, we will use Lemma \ref{DimDropUnit} to show that the first factor embedding $\iota_0:C(X) \to C(X) \tens \jsZ$ has decomposition rank at most 2.
In Step 2, we will use Step 1, with $X$ replaced by $X \times [0,1]$, to prove the theorem.

\bigskip
\underline{Step 1.}
Due to Proposition \ref{AsympSequence}, it suffices to replace $\iota_0$ by its composition with the inclusion $C(X) \tens \jsZ \subset (C(X) \tens \jsZ)_\infty$; that is, $\iota_{0}$ is now
\[ C(X) \iso C(X) \tens 1_\jsZ \subset C(X) \tens \jsZ \subset (C(X) \tens \jsZ)_\infty. \]
To show that $\dr \iota_0 \leq 2$, we verify condition (iv) of Proposition \ref{AbelianInclusionDim}.
Let $\mathcal{U}$ be an open cover of $X$ and let $\e > 0$.
By the Lebesgue Covering Lemma, we may possibly reduce $\e$ so that $\mathcal{U}$ is refined by the set of all open sets of diameter at most $\e$.
Then, it suffices to assume that $\mathcal{U}$ is in fact the set of all open sets of diameter at most $\e$.

Let $h_0,\dots,h_k,\phi,c$ be as in Lemma \ref{DimDropUnit}.
By Theorem \ref{O2dimnuc} and condition (iv) of Proposition \ref{AbelianInclusionDim}, we may find 
\[
b_j^{(i)} \in C_0(X \times Z, \cO_2) \cong C(X) \otimes C_{0}(Z) \otimes \mathcal{O}_{2}
\]
 for $i=0,1, j=0,\dots,r$ such that
\begin{enumerate}
\item for each $i=0,1$, the elements $b_0^{(i)},\dots,b_r^{(i)}$ are pairwise orthogonal,
\item for each $i,j$, the support of $b_j^{(i)}$ is contained in $U \times Z$ for some $U \in \mathcal{U}$,
\item $\big\| \sum_{i,j} b_j^{(i)} - 1_{C(X)} \tens c \big\| < \e$ (note that $c \in C_{0}(Z) \otimes 1_{\mathcal{O}_{2}}$).
\end{enumerate}

Define
\[ \hat \phi:C_{0}(X \times Z, \cO_2) \iso C(X) \tens C_0(Z,\cO_2) \to C(X,\jsZ)_\infty \]
by $\hat \phi(f \tens a) = f\phi(a)$.
This is a $^*$-homomorphism.
For $i=0,1$ and $j=0,\dots,r$, set
\[ a^{(i)}_j := \hat \phi(b_j^{(i)}), \]
and, for $j=0,1\dots,k$, set
\[ a^{(2)}_j := h_j. \]

Since $\hat \phi$ is a homomorphism, $a^{(i)}_0,\dots,a^{(i)}_r$ are pairwise orthogonal for $i=0,1$.
Also, by the definition of $\hat \phi$ and the choice of $b^{(i)}_j$, the support of each $a^{(i)}_j$ is contained in some set in $\mathcal{U}$, for $i=0,1$. Since the supports of the $h_{j}$ have diameter at most $\epsilon$, the respective statement holds for the $a^{(2)}_{j}$ as well.
Finally,
\[
\begin{array}{rll}
\displaystyle  \sum_{i,j} a^{(i)}_j &=&\displaystyle \hat \phi \Big(\sum_{i=0,1} \sum_{j=0}^k b_j^{(i)} \Big) + \sum_{j=0}^k h_j \\
&=_{\e}&\displaystyle \hat \phi(1 \tens c) + \sum_{j=0}^k h_j \\
&=& \displaystyle \phi(c) + \sum_{j=0}^k h_j \\
&=& 1,
\end{array}
\]
as required.

\bigskip
\underline{Step 2.}
Since $\jsZ$ is an inductive limit of algebras of the form $\jsZ_{p,q}$ (for $p,q \in \mathbb{N}$), by Proposition \ref{drPermanence} (i), it suffices to show that the decomposition rank of the first factor embedding
\begin{equation}
\label{DimNucCXZEmbedding2-NewIota}
\iota:= \id_{C(X, \jsZ_{p,q})} \otimes 1_{\mathcal{Z}}: C(X, \jsZ_{p,q}) \to C(X, \jsZ_{p,q}) \tens \jsZ
\end{equation}
is at most $2$. The proof will combine Step 1 with the idea of Proposition \ref{AbelianInclusionDim} (iv) $\Rightarrow$ (iii).

For $t \in [0,1]$, we let $\ev_t:\jsZ_{p,q} \to M_p \tens M_q$ denote the point-evaluation at $t$, while we also let
\begin{align*}
\overline{\ev}_0:&\jsZ_{p,q} \to M_p, \\
\overline{\ev}_1:&\jsZ_{p,q} \to M_q
\end{align*}
denote the irreducible representations which satisfy
\begin{align*}
\ev_0(\cdot) &= \overline{\ev}_0(\cdot) \tens 1_{M_q} \text{ and} \\
\ev_1(\cdot) &= 1_{M_p} \tens \overline{\ev}_1(\cdot).
\end{align*}

Let $\F \subset C(X,\jsZ_{p,q})$ be the finite set to approximate, and let $\e > 0$ be the tolerance.
Let us assume that $\F$ consists of contractions.
Let $\mathcal{U}$ be an open cover of $X \times [0,1]$, such that, for all $f \in \F$ and $U \in \mathcal{U}$, if $(x,t),(x',t') \in U$ then
\[ \|\ev_t(f(x)) - \ev_{t'}(f(x'))\| < \e/2. \]
Let us also assume that no $U \in \mathcal{U}$ intersects both $X \times \{0\}$ and $X \times \{1\}$.

Using Step 1 (with $X \times [0,1]$ in place of $X$) and Proposition \ref{AbelianInclusionDim} (iv), we may find a $3$-colourable $\frac{\epsilon}{2}$-approximate partition of unity
\[ (a^{(i)}_j)_{i=0,1,2; j=0,\dots,r} \subset C(X \times [0,1]) \tens \jsZ \]
subordinate to $\mathcal{U}$, and such that
\[ \sum a^{(i)}_j \leq 1. \]
Upon replacing $\mathcal{U}$ by a subcover if necessary, we may clearly assume that $\mathcal{U}$ is of the form $(U^{(i)}_{j})_{i=0,1,2; j=0,\dots,r}$, with the support of each $a^{(i)}_{j}$ being contained in $U^{(i)}_{j}$.

For each $i,j$, we shall choose a matrix algebra $F^{(i)}_j$ and produce maps
\[
C(X, \jsZ_{p,q}) \labelledrightarrow{\psi^{(i)}_j} F^{(i)}_j \labelledrightarrow{\phi^{(i)}_j} C(X,\jsZ_{p,q}) \tens \jsZ.
\]
We distinguish three cases, depending on properties of the set $U^{(i)}_j \in \mathcal{U}$.
In every case, we arrange that
\[ \phi^{(i)}_j\psi^{(i)}_j(f) = a^{(i)}_j \tens \ev_{t^{(i)}_j}(f(x^{(i)}_j)), \]
where $(x^{(i)}_j, t^{(i)}_j)$ is a point from $U^{(i)}_j$, and we make sense of the right-hand side by using the canonical identification of $C(X,\jsZ_{p,q}) \tens \jsZ$ with a subalgebra of 
\[ C(X \times [0,1]) \tens \jsZ \tens M_p \tens M_q \]
(determined by boundary conditions at $X \times\{0\}$ and at $X \times \{1\}$).

\bigskip
\underline{Case 1.} If $U^{(i)}_j \cap \left(X \times \{0\}\right) \neq \emptyset$, then let $(x^{(i)}_j, t^{(i)}_j = 0)$ be a point in this intersection.
We set $F^{(i)}_j := M_p$ and define
\begin{align*}
\psi^{(i)}_j(f) &= \overline{\ev}_0(f(x^{(i)}_j)), \\
\phi^{(i)}_j(T) &= a^{(i)}_j \tens T \tens 1_{M_q},
\end{align*}
By assumption $U^{(i)}_j \cap \left(X \times \{1\}\right) = \emptyset$, so for all $x \in X$,
\[ \ev_1(\phi^{(i)}_j(T)(x)) = 0, \]
and therefore, the range of $\phi^{(i)}_j$ lies in $C(X,\jsZ_{p,q}) \tens \jsZ$.

\bigskip
\underline{Case 2.} If $U^{(i)}_j \cap \left(X \times \{1\}\right) \neq \emptyset$, then as in Case 1, let $(x^{(i)}_j, t^{(i)}_j = 1)$ be a point in this intersection.
We set $F^{(i)}_j := M_q$ and define
\begin{align*}
\psi^{(i)}_j(f) &= \overline{\ev}_1(f(x^{(i)}_j)), \text{ and} \\
\phi^{(i)}_j(T) &= a^{(i)}_j \tens 1_{M_p} \tens T.
\end{align*}

\bigskip
\underline{Case 3.} If $U^{(i)}_j \cap \left(X \times \{0\}\right) = \emptyset$ and $U^{(i)}_j \cap \left(X \times \{1\}\right) = \emptyset$, then let $(x^{(i)}_j, t^{(i)}_j)$ be any point in $U^{(i)}_j$.
We set $F^{(i)}_j := M_p \tens M_q$ and define
\begin{align*}
\psi^{(i)}_j(f) &= \ev_{t^{(i)}_j}(f(x^{(i)}_j)), \text{ and} \\
\phi^{(i)}_j(T) &= a^{(i)}_j \tens T.
\end{align*}

We now set $F = \bigdsum_{i,j} F^{(i)}_j$ and use $(\psi^{(i)}_j)$ and $(\phi^{(i)}_j)$ to define 
\[ C(X,\jsZ_{p,q}) \labelledrightarrow{\psi} F \labelledrightarrow{\phi} C(X,\jsZ_{p,q}) \tens \jsZ. \]
We have that $\psi$ is c.p.c.\ since all of its components are.
Each $\phi^{(i)}_j$ is c.p.\ and order zero.
For each $i,j_1,j_2$, if $j_1 \neq j_2$ then the images of $\phi^{(i)}_{j_1}$ and $\phi^{(i)}_{j_2}$ are orthogonal.
Thus, for each $i$,
\[ \phi|_{\bigdsum_j F^{(i)}_j} \]
is order zero.
Also, $\phi(1) = \sum a^{(i)}_j \leq 1$, so that $\phi$ is contractive.

Finally, let $f\in \F$ and let us check that $\phi\psi(f) =_\e f$.
As in the proof of Proposition \ref{AbelianInclusionDim} (iv) $\Rightarrow$ (iii), we have for each $i,j$ that if $x \in U^{(i)}_j$ then
\[ \ev_{t^{(i)}_j}(f(x^{(i)}_j)) =_{\e/2} \ev_{t}(f(x)), \]
and therefore,
\[ \ev_{t}(f(x)) - \frac{\e}{2} \cdot 1_{M_p \tens M_q} \leq \ev_{t^{(i)}_j}(f(x^{(i)}_j)) \leq \ev_{t}(f(x)) + \frac{\e}{2} \cdot 1_{M_p \tens M_q}. \]

Since $a^{(i)}_j$ commutes with $f$, this gives
\begin{align*}
 a^{(i)}_j(x,t)\left(\ev_{t}(f(x)) - \frac{\e}{2} \cdot 1_{M_p \tens M_q}\right) &\leq a^{(i)}_j(x,t)\ev_{t^{(i)}_j}(f(x^{(i)}_j)) \\
&\leq a^{(i)}_j(x,t)\left(\ev_{t}(f(x)) + \frac{\e}{2} \cdot 1_{M_p \tens M_q}\right).
\end{align*}
Moreover, since $a^{(i)}_j$ vanishes outside of $U^{(i)}_j$, these inequalities continue to hold for all $x \in X$ and all $t \in [0,1]$.

Summing over $i,j$, we find that
\begin{eqnarray*}
\lefteqn{\sum_{i,j} a^{(i)}_j(x,t)\left(\ev_{t}(f(x)) - \frac{\e}{2} \cdot 1_{M_p \tens M_q}\right)} \\
&\leq& \sum_{i,j} a^{(i)}_j(x,t) \ev_{t^{(i)}_j}(f(x^{(i)}_j)) \\
&\leq& \sum_{i,j} a^{(i)}_j(x,t)\left(\ev_{t}(f(x)) + \frac{\e}{2} \cdot 1_{M_p \tens M_q}\right)
\end{eqnarray*}
and therefore,
\[
\begin{array}{rll}
\ev_t(f(x)) &=_{\e/2}&\displaystyle \sum_{i,j} a^{(i)}_j(x,t) \ev_t(f(x)) \\
&=_{\e/2}&\displaystyle \sum_{i,j} a^{(i)}_j(x,t) \ev_{t^{(i)}_j}(f(x^{(i)}_j)) \\
&=&\displaystyle \ev_t(\phi\psi(f)(x)).
\end{array}
\]
Since this holds for all $x \in X,t \in [0,1]$, this means that $\|f - \phi\psi(f)\| < \e$, as required.
\end{proof}

\end{document}